\newcommand\Oo{{\mathcal{O}}}
\newcommand\p{{\mathfrak{p}}}
\newcommand\B{{\mathfrak{P}}}
\newcommand\Z{\mathbb{Z}}
\newcommand\Q{\mathbb{Q}}
\newtheorem{theorem}{Theorem}[section]
\newtheorem{lemma}[theorem]{Lemma}
\newtheorem{proposition}[theorem]{Proposition}
\newtheorem{proposition-definition}[theorem]{Proposition-Definition}
\newtheorem{corollary}[theorem]{Corollary}
\theoremstyle{definition}
\theoremstyle{remark}
\begin{document}
\title{Wild ramification in a family of low-degree extensions arising from iteration}
\author{Benjamin Breen}
\author{Rafe Jones}
\author{Tommy Occhipinti}
\author{Michelle Yuen}

\begin{abstract}
This article gives a first look at wild ramification in a family of iterated extensions. For $c \in \Z$, we consider the splitting field of $(x^2 + c)^2 + c$, the second iterate of $x^2 + c$. We give complete information on the factorization of the ideal $(2)$ as $c$ varies, and find a surprisingly complicated dependence of this factorization on the parameter $c$. We show that $2$ ramifies (necessarily wildly) in all these extensions except when $c = 0$, and we describe the higher ramification groups in some totally ramified cases.
\end{abstract}

\maketitle

\section{Introduction}

Around a decade ago, the authors of \cite{hajir} posed the problem of studying ramification in extensions of number fields generated by iterated polynomials. Specifically, let $L$ be a number field, let $t_0 \in L$, let $f(x) \in L[x]$ have degree $d \geq 2$, and denote by $f^n(x)$ the $n$th iterate of $f$. The main objects of study in \cite{hajir} are the extensions $L_n(f, t_0)/L$ obtained by taking the splitting field of $f^n(x) - t_0$ over $L$. Ramification in these extensions holds particular interest, and one of the main results of \cite{hajir} is that when $f$ is \textit{post-critically finite}, i.e. each critical point of $f$ has a finite forward orbit under iteration, then the set of primes of $L$ ramifiying in $L_n(f, t_0)$ for at least one $n \geq 1$ is a finite set. However, there is also interest in the non-post-critically finite case: on \cite[p. 858]{hajir}, the authors ask what can be said in general about the presence of wild ramification in the extensions $L_n(f, t_0)/L$. In this article, we study perhaps the simplest non-trivial case of this question: $L = \Q$, $t_0 = 0$, $f(x) = x^2 + c$, $c \in \Z$. (Note that any wild ramification must occur above the prime $(2)$ of $\Z$.) Moreover, we restrict our attention to the case where $n = 2$, the smallest $n$ where the extensions incorporate iteration of $f$. We give a complete classification of the of the factorization of the ideal $(2)$ in the extensions $L_2(f, 0)$, which have degree at most 8 (see Theorem \ref{main1} for the most difficult part of this classification) and compute higher ramification groups in the totally ramified case (see Section \ref{ramification}). Even with these severe restrictions, it is obvious from a glance at Theorem \ref{main1} that the dependence on $c$ of the ideal factorization of $(2)$ is remarkably complicated. Then again, perhaps this complexity is not so surprising in light of the difficulty of fully understanding wild ramification in any naturally occurring family of number fields. For instance, even the class of radical extensions $\Q(\zeta_m, \sqrt[m]{a})$ presents impressive complexities (see for instance \cite{viviani}, and also \cite{obus}, \cite{sharifi1}, \cite{sharifi2} for related work). 

We fix notation that will be in use throughout. Denote by $f_c(x)$ the map $x^2 + c$, where $c \in \Z$. Write $L_c$ for $L_2(f_c, 0)$,  the splitting field of the second iterate 
$$f_c(f_c(x)) = (x^2 + c)^2 + c = x^4 + 2cx^2 + c^2 + c.$$
The degree of $L_c$ over $\Q$ depends on whether $-c$ and $-(c+1)$ are squares; if neither is, then it follows that $[L_c : \Q] = 8$ and $\Gal(L_c/\Q) \cong D_4$, the dihedral group of order 8. When $-c$ or $-(c+1)$ is a square (and $c \not\in \{-1,0\}$), then $[L_c : \Q] = 4$ and $\Gal(L_c/\Q)$ is elementary abelian or cyclic, respectively. See Section \ref{degree} for details. 


One of our main results is the following:
\begin{theorem} \label{main0}
Let $f_c(x) = x^2 + c$ with $c \in \Z \setminus \{0\}$, and let $L_c$ be the splitting field of $f_c^2(x)$. Then $L_c$ is wildly ramified at 2. 
\end{theorem}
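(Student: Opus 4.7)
My plan has two steps. First, reduce wild ramification to mere ramification: because $L_c/\Q$ is Galois with $[L_c:\Q] \in \{2,4,8\}$ (per Section \ref{degree}), any ramification index $e$ above $2$ divides the degree, so $e \in \{1,2,4,8\}$, and $e > 1$ forces $2 \mid e$, i.e.\ wild ramification. Second, produce ramification at $2$ by exhibiting a single root of $f_c^2(x) = x^4 + 2cx^2 + c(c+1)$ whose $2$-adic valuation is non-integral; any such root generates a ramified extension of $\Q_2$ sitting inside the completion of $L_c$ at some prime above $2$.

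For the second step I would analyze the Newton polygon of $f_c^2(x) \in \Z_2[x]$, whose vertex candidates are $(0, v_2(c(c+1)))$, $(2, v_2(2c))$, and $(4, 0)$. When $c$ is odd, the valuations are $\bigl(v_2(c+1),\,1,\,0\bigr)$ with $v_2(c+1) \geq 1$, and a short case split on $v_2(c+1)$ shows the polygon always contains a segment of slope $-\tfrac14$ or $-\tfrac12$ (the latter including the broken polygon when $v_2(c+1) \geq 3$), producing roots of non-integer valuation. When $c$ is even with $v_2(c) = k \geq 1$ and $4 \nmid k$, the point $(2, k+1)$ lies above the line from $(0,k)$ to $(4,0)$, so the polygon is a single segment of slope $-k/4 \notin \Z$, and every root has non-integer valuation.

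The main obstacle is the remaining case: $c$ even with $v_2(c) = k$ and $4 \mid k$. Here all four roots have integer valuation $k/4$, and the Newton polygon does not directly detect ramification. My plan is to rescale and shift. Writing $c = 2^k n$ with $n$ odd, the substitution $x = 2^{k/4} y$ clears the common factor of $2^k$ from $f_c^2(x)$ and yields
\[
g(y) = y^4 + 2^{k/2+1} n y^2 + n(2^k n + 1),
\]
whose reduction modulo $2$ is $(y+1)^4$. Shifting $y = z+1$ then produces $h(z) := g(z+1)$, whose coefficient $2$-adic valuations work out to the profile $\bigl(v_2(\text{const}),\,2,\,1,\,2,\,0\bigr)$ with $v_2(\text{const}) \geq 1$, exactly the shape from the $c$-odd case. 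The same Newton-polygon analysis then exhibits a root of $h$, and hence of $f_c^2$, with valuation in $\{1/4,\,1/2\}$. The crux is to verify that coefficient-valuation profile robustly---in particular, to control $v_2(1+n)$ in the constant term so that no unexpected cancellation alters the shape of the polygon---after which the wildness conclusion follows uniformly from the reduction in step one.
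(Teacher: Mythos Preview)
Your approach is correct, though with one phrasing issue noted below, and it differs substantially from the paper's for even $c$. The paper (Theorem~\ref{ram}) uses Newton polygons only when $c$ is odd; when $c \equiv 2 \bmod 4$ it simply observes that $(2)$ ramifies in the subfield $\Q(\sqrt{-c})$, and when $c \equiv 0 \bmod 4$ that $(2)$ ramifies in $\Q(\sqrt{-(c+1)})$ (since then $c+1 \equiv 1 \bmod 4$), in both cases via Proposition~\ref{quadraticcase}. This bypasses the $4 \mid v_2(c)$ obstruction in one line and also absorbs the biquadratic case $-c \in \Z^2$ without special treatment. Your rescale-and-shift maneuver keeps the whole proof inside a single tool at the cost of more computation; it would be the natural route if the subfield lattice were not already available.

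Two remarks on the execution. First, after the substitution a root $z_0$ of $h$ with $v_2(z_0) \in \{1/4, 1/2\}$ corresponds to $x_0 = 2^{k/4}(z_0+1)$ with $v_2(z_0+1)=0$ and hence $v_2(x_0) = k/4 \in \Z$, so you do not literally produce a root of $f_c^2$ of non-integral valuation in this case; what you do produce is $\Q_2(x_0) = \Q_2(z_0)$ ramified over $\Q_2$, which is what step one actually needs. Second, your worry about controlling $v_2(1+n)$ is unnecessary: with the profile $(v_0, 2, 1, 2, 0)$ and $v_0 \geq 1$, the Newton polygon is a single segment of slope $-1/4$ when $v_0 = 1$, while for every $v_0 \geq 2$ the point $(2,1)$ lies on the lower hull and forces a final segment of slope $-1/2$ to $(4,0)$. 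No cancellation in the constant term can spoil this.
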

Note that one can certainly fail to obtain ramification at 2 in the splitting field of $f_c(x)$; for instance this is the case when $c \equiv 3 \bmod{4}$. 



Because $L_c$ is Galois, we have
\begin{equation} \label{genfact}
2\Oo_{L_c} = \prod_{i=1}^g \p_i^e,
\end{equation}
where $\Oo_{L_c}$ is the ring of integers of $L_c$, each $\p_i$ has a common residue degree $f := [(\Oo_{L_c} /\p_i) : (\Z/2\Z)]$ and $efg = [L_c : \Q]$. 
We give complete information on the factorization of the ideal $2\Oo_{L_c}$ for all $c \not\in \{-1,0\}$. In the generic case when $-c$ and $-(c+1)$ are both non-squares, we have $[L_c : \Q] = 8$ (see Section \ref{degree}). We show the following.
 
\begin{theorem} \label{main1}
Let $c \in \Z$ with $-c$ and $-(c+1)$ not squares in $\Z$. Let $L_c$ be the splitting field of $(x^2 + c)^2 + c$, and consider the factorization given in \eqref{genfact}. We have
\begin{align*}
e = 8, f = 1, g = 1 & \qquad \text{iff one of}  \; \begin{cases} \text{$c \equiv 1 \bmod{4}$} \\ \text{$c = 2^{2k+1}m$, where $k \geq 1$ and $m$ is odd} \end{cases} \\
e = 4, f = 2, g = 1 & \qquad \text{iff one of}  \; \begin{cases} \text{$c \equiv 2 \bmod{8}$} \\ \text{$c \equiv 3 \bmod{16}$}
\\ \text{$c \equiv 4 $ or $12 \bmod{32}$} 
\\ \text{$c = 4^{k}(8r \pm 3)$, where $k \geq 2$ and $r \in \Z$}
\end{cases} \\
e = 4, f = 1, g = 2 & \qquad \text{iff one of}  \; 
\begin{cases}
\text{$c \equiv 6 \bmod{8}$} \\
\text{$c \equiv 23$ or $28 \bmod{32}$} \\
\text{$c \equiv 16 \bmod{128}$} \\
\text{$c = 4^{2k}(8r + 1)$, where $k \geq 2$ and $r \in \Z$} \\
\text{$c = 4^{2k-1}(8r + 7)$, where $k \geq 2$ and $r \in \Z$} \\
\text{$c = -1 + 2^{k}(4r + 1)$, where $k \geq 4$ and $r \in \Z$} \\
\end{cases} \\
e = 2, f = 2, g = 2 & \qquad \text{iff one of}  \;
\begin{cases}
\text{$c \equiv 11 \bmod{16}$} \\
\text{$c \equiv 39$ or $52 \bmod{64}$} \\
\text{$c \equiv 240 \bmod{256}$} \\
\text{$c = -1 + 4^{k}(64r +24)$, where $k \geq 1$ and $r \in \Z$} \\
\text{$c=-1 + 4^k(8r+3)$ with $k \ge 2$ and $r\in \Z$} \\
\text{$c = 4^{2k-1} (16r+9)$, where $k \geq 2$ and $r \in \Z$} \\
\text{$c = 4^{2k}(16r + 7)$, where $k \geq 2$ and $r \in \Z$} \\
\end{cases} \\
e = 2, f = 1, g = 4 & \qquad \text{iff one of}  \;
\begin{cases}
\text{$c \equiv 7$ or $20 \bmod{64}$} \\
\text{$c \equiv 112 \bmod{256}$} \\
\text{$c = -1 + 4^{k}(64r - 8)$, where $k \geq 1$ and $r \in \Z$} \\
\text{$c=-1 + 4^k(8r+7)$, where $k\ge 2$ and $r \in \Z$} \\
\text{$c = 4^{2k-1} (16r+1)$, where $k \geq 2$ and $r \in \Z$} \\
\text{$c = 4^{2k}(16r + 15)$, where $k \geq 2$ and $r \in \Z$} \\
\end{cases}
\end{align*}
and these are the only possibilities.
\end{theorem}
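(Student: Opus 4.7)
My plan is to proceed $2$-adically throughout. Since $L_c/\Q$ is Galois of degree $8$ under our hypothesis, every prime of $L_c$ above $2$ shares a common ramification index $e$ and residue degree $f$, and one has $L_c \otimes_\Q \Qp \cong \prod_{i=1}^g F_i$ for $p=2$, with each $F_i$ a field of degree $ef = 8/g$ over $\Qp$. Determining $(e,f,g)$ is therefore equivalent to understanding this $\Qp$-algebra, and I will climb to it through a tower of quadratic extensions. The four roots of $f_c^2(x)$ are $\pm\sqrt{-c\pm\sqrt{-c}}$, and the identity
\[
\sqrt{-c+\sqrt{-c}}\cdot\sqrt{-c-\sqrt{-c}} \;=\; \sqrt{c^2+c} \;=\; \sqrt{-c}\cdot\sqrt{-(c+1)}
\]
shows that $L_c$ contains the biquadratic subfield $K_c := \Q(\sqrt{-c},\sqrt{-(c+1)})$ and that $L_c = K_c(\sqrt{-c+\sqrt{-c}})$.

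\emph{First,} I will factor $(2)$ in $K_c$. Stratifying $c$ by $v_2(c)$ and $v_2(c+1)$, I will read off the square classes of $-c$ and $-(c+1)$ in $\Qp^\times/(\Qp^\times)^2$ using the finer congruence information that residue characteristic $2$ demands (modulo $8$ on the odd part, and past that when $v_2$ is positive). Each such class determines the factorization of $(2)$ in the corresponding quadratic subfield of $K_c$, and multiplicativity of $(e,f,g)$ across the biquadratic extension $K_c/\Q$ then pins down the factorization of $(2)$ at the $K_c$-level.

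\emph{Second,} for each prime $\q$ of $K_c$ above $2$, I will compute the square class of $\gamma := -c + \sqrt{-c}$ in $(K_c)_\q^\times / ((K_c)_\q^\times)^2$. This controls whether the quadratic extension $(K_c)_\q(\sqrt{\gamma})$ is trivial, unramified, or (necessarily wildly) ramified, and assembling the contributions at each $\q$ with the $K_c/\Q$ data yields $(e,f,g)$ for $L_c$ by multiplicativity in the tower $L_c/K_c/\Q$. The final step is bookkeeping: verify that the congruence conditions listed in the theorem partition $\{c \in \Z \setminus \{-1,0\} : -c,\, -(c+1) \text{ non-squares}\}$ disjointly and exhaustively, and that each stratum matches the $(e,f,g)$-regime produced by the local computation.

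The main obstacle is wildness at $2$. Squareness in a $2$-adic local field is decided not by valuation alone but by the residue modulo $8\pi^{v}$ for a uniformizer $\pi$, and the precision required grows with the ramification index of the ground field. In cases where $v_2(c)$ is large, $\sqrt{-c}$ has large valuation in $K_c$, so $\gamma = -c+\sqrt{-c}$ and $-c$ agree to many $2$-adic digits; the square class of $\gamma$ is then determined by \emph{higher-order} digits of $c$, i.e.\ by $c/4^k \bmod 8$ for an appropriate $k$. This is precisely why the case list fragments into infinite families indexed by powers of $4$, and an analogous phenomenon on the $c+1$ side produces the families of the form $c = -1 + 2^k(\cdots)$.
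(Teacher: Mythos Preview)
Your plan is sound and would work, but it takes a genuinely different route from the paper's. The paper also works $2$-adically in the hard cases (Sections~5--6), but rather than always climbing the fixed tower $\Q \subset K_c \subset L_c$, it indexes the analysis by the inertia fixed field $L_c^{I_\B}$ and exploits the full $D_4$ subfield lattice. Concretely: when $2$ splits in $\Q(\sqrt{-c})$ the paper, like you, studies the square classes of $-c \pm \sqrt{-c}$ directly in $\Q_2$; but when $2$ ramifies in both $\Q(\sqrt{-c})$ and $\Q(\sqrt{-(c+1)})$ yet splits in $\Q(\sqrt{c^2+c})$, the paper switches to the \emph{other} non-normal quartic subfield $\Q(\alpha+\beta)$, whose generator satisfies $(\alpha+\beta)^2 = -2c + 2\sqrt{c^2+c} \in \Q_2$, so once again the computation reduces to square classes in $\Q_2$ itself. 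Your approach, by contrast, would in exactly this regime have to compute the square class of $\gamma = -c + \sqrt{-c}$ inside a \emph{wildly ramified} completion $(K_c)_\q$ of degree up to $4$ over $\Q_2$, where $F^\times/(F^\times)^2$ is much larger and the precision needed to decide ``square / unramified-class / ramified-class'' is correspondingly higher. Both methods give the same answer, but the paper's trick of picking whichever quartic subfield has $2$ splitting in its unique quadratic subfield keeps every square-class test in $\Q_2$; your uniform tower is conceptually cleaner but pays for that uniformity in the ramified-$K_c$ cases. The paper also peels off the $g=1$ cases ($e=8$ and $(e,f)=(4,2)$) first, handling them purely via the three quadratic subfields and decomposition/inertia group formalism (Section~4); this is available to you too and would shorten your casework.
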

When either $-c$ or $-(c+1)$ is a square in $\Z$, we give corresponding classifications in Propositions \ref{biquad} and \ref{cyclic}. 

From Theorem \ref{main1}, we have that $2\Oo_{L_c}$ is totally ramified if and only if $c \equiv 1 \bmod{4}$ or $c = 2^{2k+1}m$, where $k \geq 1$ and $m$ is odd. Note that in both of these cases, we have that $-c$ and $-(c+1)$ are non-squares, and so Theorem \ref{main1} applies. In these cases, we compute the associated ramification filtration in Section \ref{ramification}. We show:

\begin{theorem} \label{totram}
Let $(G_i)_{i \geq 0}$ be the ramification filtration of $\Gal(L_c/\Q)$.
If $c \equiv 1 \bmod{4}$, we have 
$$\#G_0 = \#G_1 = 8, \quad \#G_2 = \#G_3 = 4, \quad \#G_4 = \#G_5 = 2, \quad \text{$\#G_i = 1$ for $i \geq 6$}.
$$
If $c = 2^{2k+1}m$, where $k \geq 1$ and $m$ is odd, we have 
$$\#G_0 = \#G_1 = 8, \quad \#G_2 = \#G_3 = 4, \quad \text{$\#G_i = 2$ for $4 \leq i \leq 7$}, \quad \text{$\#G_i = 1$ for $i \geq 8$}.
$$
\end{theorem}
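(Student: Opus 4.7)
The plan is to compute the filtration directly from the definition $G_i = \{\sigma \in G_0 : v_L(\sigma(\pi) - \pi) \geq i + 1\}$, where $\pi$ is a uniformizer of $L_c$ at the (unique) prime above $2$ and $v_L$ is normalized so that $v_L(\pi) = 1$. Since $L_c/\Q_2$ is totally and wildly ramified of degree $8$ in both cases, $G_0 = G_1 = \Gal(L_c/\Q) \cong D_4$ automatically, and each $G_i$ for $i \geq 2$ must be a normal subgroup of $D_4$. The only possibilities are the trivial group, the center (the unique normal subgroup of order $2$), one of the three normal subgroups of order $4$, or $D_4$; so the task is to compute $v_L(\sigma(\pi) - \pi)$ for each of the seven nontrivial $\sigma$ and a fixed uniformizer $\pi$.

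To find $\pi$ explicitly, I work in the tower $\Q_2 \subset \Q_2(\beta) \subset \Q_2(\alpha) \subset L_c$, where $\beta := \sqrt{-c}$, $\alpha := \sqrt{-c + \beta}$ is a root of $f_c^2$ lying above $\beta$, and $\gamma := \sqrt{-c - \beta}$ generates the top step $L_c = \Q_2(\alpha)(\gamma)$; under the case hypotheses, each step is totally ramified quadratic. In Case 1 ($c \equiv 1 \bmod 4$), $\beta - 1$ is a uniformizer of $\Q_2(\beta)$ with Eisenstein polynomial $x^2 + 2x + (c+1)$ over $\Q_2$, and $\alpha$ is a uniformizer of $\Q_2(\alpha)$ with Eisenstein polynomial $x^2 + (c - \beta)$ over $\Q_2(\beta)$. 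In Case 2 ($c = 2^{2k+1}m$ with $k \geq 1$ and $m$ odd), a uniformizer of $\Q_2(\beta)$ is $\beta_0 := \sqrt{-2m}$ (so $\beta = 2^k \beta_0$), and since $\alpha$ has $\Q_2(\alpha)$-valuation $2k+1$, a uniformizer of $\Q_2(\alpha)$ is $\alpha/\beta_0^k$. In either case, combining the lower-tower uniformizers produces an explicit degree-$8$ Eisenstein polynomial over $\Q_2$ for a uniformizer $\pi$ of $L_c$.

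The main obstacle is the top step $L_c/\Q_2(\alpha)$: the natural quadratic generator $\gamma$ satisfies $\gamma^2 = -2c - \alpha^2 \in \Q_2(\alpha)^\ast$, which has even $\Q_2(\alpha)$-valuation in both cases, so $\gamma$ has even $v_L$ and is not a uniformizer of $L_c$. To exhibit an odd-valuation element one must instead take a combination like $\alpha + \lambda \gamma$ with $\lambda \in \Q_2(\alpha)$ chosen so that leading terms cancel to the needed order. Once $\pi$ is chosen, I would parameterize $D_4$ by its action on $(\alpha, \gamma)$---the four elements sending $\alpha, \gamma$ to $\pm\alpha, \pm\gamma$ (the Galois subgroup of $L_c/\Q_2(\beta)$) and the four elements swapping $\alpha \leftrightarrow \pm\gamma$---and for each nontrivial $\sigma$ compute $v_L(\sigma(\pi) - \pi)$ by writing this difference in terms of $\alpha, \beta, \gamma$. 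The ramification groups are then read off. As a consistency check, $\sum_{i \geq 0}(|G_i| - 1)$ should equal $v_L(\mathfrak{d}_{L_c/\Q_2})$, namely $22$ in Case 1 and $24$ in Case 2; this can be verified independently via the tower formula for the different applied to $\Q_2 \subset \Q_2(\beta) \subset \Q_2(\alpha) \subset L_c$.
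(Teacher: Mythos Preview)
Your proposal is correct and follows essentially the same strategy as the paper: exhibit an explicit uniformizer $\pi$ of $L_c/\Q_2$ and then compute $v_L(\sigma(\pi)-\pi)$ for each nontrivial $\sigma\in D_4$, reading off the filtration from these seven valuations (with $G_0=G_1=D_4$ automatic from total wild ramification). The only difference is in how the uniformizer is found: rather than building systematically through the tower $\Q_2\subset\Q_2(\sqrt{-c})\subset\Q_2(\alpha)\subset L_c$ as you suggest, the paper uses a heuristic on hypothetical $\pi$-adic expansions (aided by a SAGE computation) to discover, in the case $c\equiv 1\bmod 4$, that $\alpha+\beta+\beta^2$ has $v_\pi$ equal to $7$ (here $\alpha,\beta$ are the paper's two roots of $f_c^2$, i.e.\ your $\alpha,\gamma$), and then takes $\pi=\tfrac{\alpha}{2}(\alpha+\beta+\beta^2)$ and checks directly that its degree-$8$ minimal polynomial over $\Q_2$ is Eisenstein; the seven valuations are then computed exactly as you outline, and the second case is handled analogously with a different explicit $\pi$.
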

See Section \ref{ramification} for definitions and more precise results.

The structure of this article is as follows. In Section \ref{degree} we determine the degree and Galois group of $L_c$ for various values of $c$. In Section \ref{global} we present background material on global methods to determine the factorization of a prime in an extension field in various cases, and use this to determine the factorization of $2\Oo_{L_c}$ in the case where $-c$ is a square in $\Z$ (Proposition \ref{biquad}). In Section \ref{mainglob} we prove some cases of Theorem \ref{main1} using the methods of Section \ref{global}. In Section \ref{local} we give background on local methods for determining the factorization of a primein an extension field, and use this to prove Theorem \ref{main0} and determine the factorization of $2\Oo_{L_c}$ in the case where $-(c+1)$ is a square in $\Z$ (Proposition \ref{cyclic}). In Section \ref{mainloc} we complete the proof of Theorem 1.2, using the methods of Section \ref{local}. In Section \ref{ramification} we prove theorem \ref{totram}.

While it would be possible to present a purely local proof of Theorem \ref{main1} and many of the other results in this article, we find the interplay between the global and local methods instructive as to their particular strengths.



\section{Degree and Galois groups} \label{degree}

In this section we compute the degree $[L_c : \Q]$ and the Galois group $\Gal(L_c/\Q)$. 

\begin{proposition} \label{irred}
The polynomial $f_c^2(x)$ is irreducible over $\Q$ if $-c$ is not a square in $\Z$. When $-c$ is a square in $\Z$ and $c \neq 0, -1$, then upon writing $c = -b^2$ for $b \in \Z$ we have
\begin{equation} \label{factor}
f_c^2(x) = (x^2 - (b^2 - b))(x^2 - (b^2 + b)),
\end{equation}
where each of the quadratic factors is irreducible.
\end{proposition}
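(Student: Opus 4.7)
The plan is to handle the two halves of the proposition separately, each via a short computation combined with the observation that two consecutive integers are never simultaneously perfect squares.

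For the first half, let $\beta$ be a root of $f_c$, so that $\beta^2 = -c$ and $\Q(\beta) = \Q(\sqrt{-c})$. Factoring over $\Q(\beta)$ gives
$$f_c^2(x) = (f_c(x) - \beta)(f_c(x) + \beta) = \bigl(x^2 - (\beta - c)\bigr)\bigl(x^2 - (-\beta - c)\bigr).$$
I would then invoke the standard iterated-polynomial criterion: $f_c^2(x)$ is irreducible over $\Q$ if and only if $f_c$ is irreducible (i.e.\ $-c$ is not a square in $\Z$) and $\beta - c$ is not a square in $\Q(\sqrt{-c})$. The only nontrivial direction is that if $\beta - c = \gamma^2$ in $\Q(\sqrt{-c})$, then $(x - \gamma)(x - \sigma(\gamma)) \in \Q[x]$ is a proper quadratic factor of $f_c^2$, where $\sigma$ generates $\Gal(\Q(\sqrt{-c})/\Q)$.

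To rule out this square condition, I would pass to norms: $N_{\Q(\sqrt{-c})/\Q}(\beta - c) = (\beta - c)(-\beta - c) = c^2 + c$. If $\beta - c$ were a square in $\Q(\sqrt{-c})$, then $c^2 + c = c(c+1)$ would be a square in $\Z$. But a product of two consecutive integers lies strictly between two consecutive squares for every $c \in \Z \setminus \{0, -1\}$, and the hypothesis that $-c$ is not a square already excludes the values $c = 0$ and $c = -1$. This completes the first half.

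For the second half, when $c = -b^2$ with $b \in \Z$ and $c \notin \{0, -1\}$, direct expansion verifies the displayed factorization \eqref{factor}. Replacing $b$ by $-b$ merely swaps the two factors, so I may assume $b \geq 2$ (since $c \notin \{0, -1\}$ forces $|b| \geq 2$). Then $b^2 - b = b(b-1)$ and $b^2 + b = b(b+1)$ are products of consecutive positive integers and each lies strictly between two consecutive squares, hence neither is a perfect square. Consequently each quadratic factor $x^2 - (b^2 \pm b)$ is irreducible over $\Q$. No step is particularly delicate; the only mild subtlety is setting up the right irreducibility criterion in the first half, after which both halves collapse to the same elementary fact about consecutive integers.
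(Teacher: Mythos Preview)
Your proof is correct and follows essentially the same approach as the paper: both hinge on the observation that $c^2+c=c(c+1)$ is never a square in $\Z$ for $c\neq 0,-1$, and the paper explicitly flags this as the key point. Your write-up simply fills in the details the paper leaves implicit---the tower $\Q\subset\Q(\beta)\subset\Q(\alpha)$ and the norm computation $N_{\Q(\sqrt{-c})/\Q}(\beta-c)=c^2+c$---so there is no genuine methodological difference.
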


\begin{proof}
Because $f_c^2(x)$ is only of degree 4, this can be done via a straightforward elementary argument; the key point is that when $-c$ is not a square in $\Z$, $f_c^2(x)$ is irreducible provided that $c^2 + c$ is not a square in $\Z$, which holds for all $c \neq 0, -1$ and in particular for all $c$ with $-c$ not a square. For a much more general result applying to all iterates of $f_c$, see \cite[Proposition 4.5]{quaddiv}.
\end{proof}

The resolvent cubic of $f_c^2(x)$ is 
\begin{equation} \label{resolvent}
x^3 - 2cx^2 - (4c^2 + 4c)x + 8c^3 + 8c^2 = (x - 2c)(x^2 - 4c^2 - 4c).
\end{equation}
Because this has a root in $\Q$, it follows that $\Gal(L_c/\Q)$ is a subgroup of $D_4$, the dihedral group of order 8 \cite[Section 14.6]{DF}. Another way to see this is to note that one may form a binary rooted tree $T$ with root 0 and vertices consisting of the roots of $f_c(x)$ and $f_c^2(x)$, where vertices $v_1$ and $v_2$ are connected if and only if $f_c(v_1) = v_2$. Then it is easy to see that $\Gal(L_c/\Q)$ injects into $\Aut(T)$, and the latter is isomorphic to $D_4$.

Moreover, note that the roots of the quadratic factor in \eqref{resolvent} are $\pm \sqrt{4(c^2 + c)}$. But $c^2 + c$ is not a square in $\Z$ since $c \not\in \{-1,0\}$, and it follows that the quadratic factor in \eqref{resolvent} is irreducible. It thus follows from \cite[Section 14.6]{DF} that $\Gal(L_c/\Q)$ is isomorphic to either $D_4$ or $\Z/4\Z$. 

Note further that the roots of $f_c^2(x)$ are $\{\pm \alpha, \pm \beta\}$, where 
\begin{equation} \label{ab}
\alpha = \sqrt{-c + \sqrt{-c}} \qquad  \text{and} \qquad \beta = \sqrt{-c - \sqrt{-c}}.
\end{equation}
We have $\alpha \beta = \sqrt{-c}\sqrt{-(c+1)}$, and hence $L_c = \Q(\alpha)(\beta) = \Q(\alpha)(\sqrt{-(c+1)})$.

Suppose now that $-c$ and $-(c+1)$ are both non-squares in $\Z$.   We saw above that when $-c$ is not a square that $\Gal(L_c/\Q)$ must be either isomorphic to the cyclic of order 4 or to $D_4$.  It is evident that, under the addtional hypothesis that $-(c+1)$ is not a square, the field $L_c$ contains at least two distinct quadratic subextensions of $\Q$: $\Q(\sqrt{-c})$ and $\Q{\sqrt{-(c+1)}}$, and thus $\Gal(L_c/\Q)$ cannot be cyclic.  
Hence $\Gal(L_c/\Q) \cong D_4$. It is now straightforward to write down its subfield lattice; see Figure 1.

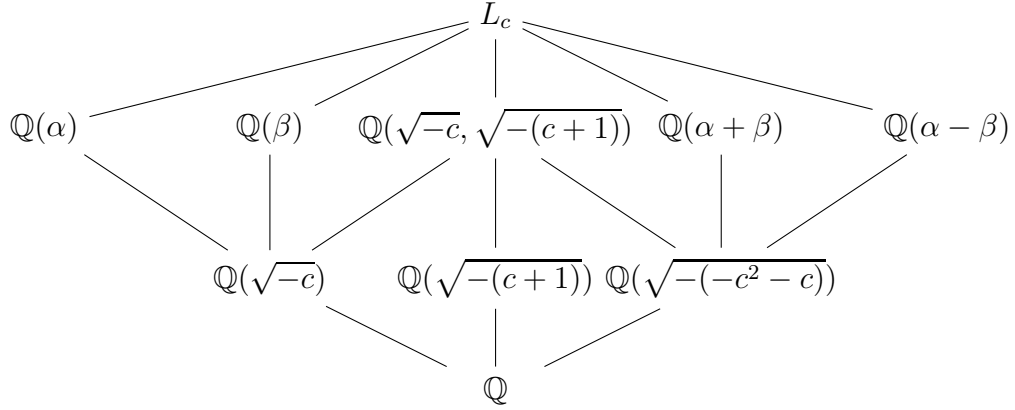
\begin{figure} \label{fig:lattice}
\begin{tikzpicture}[sibling distance=3cm]
\centering
\node (topnode) at (0,5) { $L_c$ } 
child { node {$\mathbb{Q}(\alpha)$}}
child { node {$\mathbb{Q}(\beta)$}} 
child { node {{$\mathbb{Q}(\sqrt{-c},\sqrt{-(c+1)}$)}} }
child { node {{$\mathbb{Q}(\alpha+\beta$)}} }
child { node {{$\mathbb{Q}(\alpha-\beta$)}} }
;
\node[minimum width=3cm](bottomnode) {$\mathbb{Q}$} [grow'=up]
child { node {$\mathbb{Q}(\sqrt{-c}$)} }
child { node {$\mathbb{Q}(\sqrt{-(c+1)}$)}}
child { node {$\mathbb{Q}(\sqrt{-(-c^2-c)})$} }
;
\foreach \x in {1,2,3}{
\draw (bottomnode-1) -- (topnode-\x);
}
\foreach \x in {3}{
\draw(bottomnode-2) -- (topnode-\x);}
\foreach \x in {3,4,5}{
\draw(bottomnode-3) -- (topnode-\x);}
\end{tikzpicture}
\caption{Subfield lattice for $L_c$ over $\Q$ when both $-c$ and $-(c+1)$ are non-squares.}
\end{figure}


Now when $-c$ is not a square in $\Z$ but $-(c+1)$ is a square, we have that $f_c^2(x)$ is irreducible and $L_c = \Q(\alpha)(\sqrt{-(c+1)}) = \Q(\alpha)$, and thus $[L_c : \Q] = 4$. But we have already shown that $\Gal(L_c/\Q)$ is isomorphic to $D_4$ or $\Z/4\Z$, and hence we must have $\Gal(L_c/\Q) \cong \Z/4\Z$.

Finally, when $-c$ is a square in $\Z$ and $c \neq 0, -1$, we may write $c = -b^2$ and we have the factorization in \eqref{factor}. Thus $L_c = \Q(\sqrt{b^2 + b}, \sqrt{b^2 - b})$, and we claim $[L_c : \Q] = 4$. It suffices to show that $b^2 + b$ is not a square in $\Q(\sqrt{b^2 - b})$, and a straightforward argument shows this holds provided $c \not\in \{-1, 0\}$. Thus in this case we see that $\Gal(L_c/\Q) \cong (\Z/2\Z \times \Z/2\Z)$.

Putting this all together, we have shown:

\begin{proposition} \label{galois}
Let $f_c(x) = x^2 + c$ for $c \in \Z \setminus \{-1, 0\}$, and let $L_c = \Q(\alpha, \beta)$ be the splitting field of $f_c^2(x)$ over $\Q$, where $\alpha$ and $\beta$ are as in \eqref{ab}. 
\begin{enumerate}
\item If neither $-c$ nor $-(c+1)$ is a square in $\Z$, then $L_c$ is a $D_4$-extension of $\Q$ with subfield lattice given in Figure 1.
\item If $-c$ is not a square in $\Z$ but $-(c+1)$ is a square in $\Z$, then $L_c = \Q(\alpha)$ and $L_c$ is a $\Z/4\Z$-extension of $\Q$ with unique quadratic subfield $\Q(\sqrt{-c})$.
\item If $c = -b^2 \in \Z \setminus \{-1, 0\}$, then $L_c = \Q(\sqrt{b^2 + b}, \sqrt{b^2 - b})$ and $L_c$ is a 
$(\Z/2\Z \times \Z/2\Z)$-extension of $\Q$. 
\end{enumerate}
\end{proposition}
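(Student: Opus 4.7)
The plan is to combine the structural information provided by the resolvent cubic \eqref{resolvent} with the explicit radical form of the roots in \eqref{ab} to pin down the Galois group and then, in case (1), to read off the subfield lattice. I would begin by recording the general fact that, because the resolvent cubic of $f_c^2(x)$ has the rational root $x = 2c$, whenever an irreducible quartic factor of $f_c^2(x)$ exists its Galois group embeds into $D_4$ (cf.\ \cite[Section 14.6]{DF}). The identity $\alpha\beta = \sqrt{-c}\sqrt{-(c+1)}$ gives $L_c = \Q(\alpha)(\sqrt{-(c+1)})$, which yields a convenient tower for computing $[L_c:\Q]$.

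For part (1), Proposition \ref{irred} gives $[\Q(\alpha):\Q] = 4$, and since $\Q(\sqrt{-c}) \subseteq \Q(\alpha)$ via $\alpha^2 = -c + \sqrt{-c}$, the extension $L_c/\Q$ contains the two distinct quadratic subfields $\Q(\sqrt{-c})$ and $\Q(\sqrt{-(c+1)})$. This forces $[L_c:\Q] = 8$ and excludes $\Z/4\Z$ (which has a unique quadratic subfield), so $\Gal(L_c/\Q) \cong D_4$. The subfield lattice of Figure 1 then follows from the subgroup lattice of $D_4$ together with explicit computations such as $(\alpha \pm \beta)^2 = -2c \pm 2\sqrt{-c}\sqrt{-(c+1)}$, which identify the fields fixed by each order-$2$ subgroup.

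For part (2), the hypothesis $\sqrt{-(c+1)} \in \Q$ yields $L_c = \Q(\alpha)$, of degree $4$ by Proposition \ref{irred}. Since $c \ne 0, -1$, the resolvent cubic analysis still confines $\Gal(L_c/\Q)$ to $D_4$, so the Galois group is a transitive subgroup of order $4$: either $\Z/4\Z$ or the Klein four group. The Klein four case would require three distinct quadratic subfields, but $\Q(\alpha)$ contains only $\Q(\sqrt{-c})$ (e.g.\ by inspecting which square roots lie in $\Q(\alpha)$), so we obtain $\Z/4\Z$. For part (3), $c = -b^2$ and the factorization \eqref{factor} give $L_c = \Q(\sqrt{b^2-b}, \sqrt{b^2+b})$; biquadraticity will follow once one checks that $b^2 - b$, $b^2 + b$, and $(b^2-b)(b^2+b) = b^2(b^2 - 1)$ are all non-squares in $\Q$, which reduces to $b^2 - 1$ being non-square in $\Z$, valid for all $b$ with $c \notin \{-1, 0\}$.

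The main obstacle, as I see it, is not in identifying the three Galois groups — this is essentially bookkeeping once the resolvent cubic and the explicit radicals are in hand — but in the explicit matching of the five quartic subfields in part (1) to the five order-$2$ subgroups of $D_4$. This requires a careful calculation with generators of $D_4$ acting on $\{\pm\alpha, \pm\beta\}$ and a verification that the candidate quartic subfields $\Q(\alpha)$, $\Q(\beta)$, $\Q(\alpha+\beta)$, $\Q(\alpha - \beta)$, and $\Q(\sqrt{-c},\sqrt{-(c+1)})$ are pairwise distinct for all relevant $c$.
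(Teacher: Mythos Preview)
Your approach mirrors the paper's almost exactly: resolvent cubic to bound the Galois group, the identity $\alpha\beta = \sqrt{-c}\sqrt{-(c+1)}$ to build the tower $L_c = \Q(\alpha)(\sqrt{-(c+1)})$, and a count of quadratic subfields to distinguish the cases. The one genuine gap is in part~(1). You write that the existence of two distinct quadratic subfields ``forces $[L_c:\Q] = 8$,'' but it does not: the Klein four group $V_4$ sits inside $D_4$ as a transitive subgroup of order $4$, and a $V_4$-extension has three quadratic subfields. What rules out $V_4$ is the \emph{stronger} conclusion from the resolvent cubic: since $c^2+c$ is not a square in $\Z$ for $c \neq 0,-1$, the quadratic factor $x^2 - 4(c^2+c)$ in \eqref{resolvent} is irreducible, and the classification in \cite[Section 14.6]{DF} then yields $\Gal(L_c/\Q) \in \{D_4, \Z/4\Z\}$, not merely $\Gal(L_c/\Q) \leq D_4$. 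With that dichotomy in hand, two distinct quadratic subfields excludes $\Z/4\Z$ and hence forces $D_4$. This is precisely the paper's argument, and you have all the ingredients; you just need to invoke the irreducibility of the quadratic factor rather than only the rationality of the root $2c$.

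The same sharpening also streamlines your part~(2): once $\Gal(L_c/\Q) \in \{D_4, \Z/4\Z\}$ and $[L_c:\Q] = 4$, the conclusion $\Z/4\Z$ is immediate, with no need to ``inspect which square roots lie in $\Q(\alpha)$'' --- a step you leave vague and which, absent the resolvent refinement, risks circularity. Your handling of part~(3) and your remarks on verifying the subfield lattice are in line with the paper.
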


\section{Factorization of the ideal $2\Oo_{L_c}$: Global methods} \label{global}


In this section we calculate decomposition and inertia groups in $\Gal(L_c/\Q)$ for many values of $c$, and use this to give the factorization of $2\Oo_{L_c}$ for many values of $c$. 

Let $L/K$ be any Galois extension of number fields, with $\Gal(L/K) = G$. Recall that the \textit{decomposition group} of $L/K$ at a prime $\B$ in $\Oo_{L}$ is defined to be
$$
D_\B = \{\sigma \in G : \sigma(\B) = \B\},
$$
where $G = \Gal(L/K)$. 
The \textit{inertia subgroup} at $\B$ is the kernel of the natural homomorphism sending $\sigma \in D$ to the map
$\overline{\sigma}$ on the residue field $\Oo_L / \B$. In other words, if $\B \cap K = \p$, then we have
$$
I_\B = \{\sigma \in G : \text{$\sigma(x) \equiv x \bmod{\B}$ for all $x \in \Oo_L$} \}.
$$
Clearly $I_\B$ is a subgroup of $G$, and it is straigtforward to check that in fact $I_\B$ is normal in $G$, with 
$D_\B/I_\B \cong \Gal((\Oo_L/\B)/(\Oo_K/\p))$. The latter is an extension of finite fields, and hence Galois with cyclic Galois group. 

The utility of $D_\B$ and $I_\B$ arises mainly through their connection to ideal factorizations such as the one in \eqref{genfact}. Indeed, $|D_\B| = ef$ (and hence $g = [G:D_\B]$), and $|I_\B| = e$. Moreover, the decomposition and inertia groups at primes above a given prime $\p$ are all conjugate subgroups of $G$. Finally, the fixed fields of $D_\B$ and $I_\B$ enjoy special properties: 
\begin{enumerate}[label=\Alph*]
\item[A.] \label{decomp} $L^{D_\B}$ contains all sub-extensions $E$ of $L/K$ in which $E \cap \B$ appears in the factorization of $\p\Oo_E$ without ramification or resiude extension; and 
\item[B.] \label{inertia} $L^{I_\B}$ contains all sub-extensions $E$ in which $E \cap \B$ appears in the factorization of $\p\Oo_E$ without ramification.
\end{enumerate}

Recall that if $t$ is an integer, $2\Oo_{\Q(\sqrt{-t})}$ splits if it factors as $\p_1 \p_2$ in $\Oo_{\Q(\sqrt{-t})}$, with $\p_1 \neq \p_2$, is inert if it remains prime in $\Oo_{\Q(\sqrt{-t})}$, and ramifies if it factors as $\p^2$ in $\Oo_{\Q(\sqrt{-t})}$. We observe:

\begin{proposition} \label{quadraticcase}
Let $t \in \Z$ with $-t$ not a square, and write $t = 4^ns$ with $4 \nmid s$. Then
\begin{equation*} 
2\Oo_{\Q(\sqrt{-t})} \;\;
\begin{dcases}
    								\text{splits} & \text{if $s \equiv 7 \bmod{8},$}\\
    								\text{is inert} &\text{if  $s \equiv 3 \bmod{8},$}\\
                                    \text{ramifies} & \text{otherwise.}
\end{dcases}
\end{equation*}
\end{proposition}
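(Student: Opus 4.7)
The plan is to reduce the problem to the classical classification of the behavior of $2$ in the ring of integers of a quadratic field, expressed in terms of the square-free part $D$ of $-s$, and then to translate the conditions on $D$ back into conditions on $s\bmod 8$.

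First I would observe that since $t = 4^n s$, we have $\Q(\sqrt{-t}) = \Q(\sqrt{-s})$, so it suffices to treat $s$. Write $-s = D m^2$ where $D$ is square-free. The key (easy) observation is that $m$ must be odd: if $2 \mid m$ then $4 \mid m^2 \mid s$, contradicting $4 \nmid s$. Consequently $m^2 \equiv 1 \bmod 8$, so $D \equiv -s \bmod 8$.

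Next I would invoke the standard fact from algebraic number theory describing the splitting of $2$ in $\Q(\sqrt{D})$ for square-free $D$: namely, $2$ ramifies if and only if $D \equiv 2, 3 \bmod 4$ (equivalently, $2$ divides the discriminant), $2$ splits if and only if $D \equiv 1 \bmod 8$, and $2$ is inert if and only if $D \equiv 5 \bmod 8$. The split/inert dichotomy for $D \equiv 1 \bmod 4$ follows from applying Kummer--Dedekind to the minimal polynomial $x^2 - x + (1-D)/4$ of $(1+\sqrt{D})/2$, since this polynomial factors modulo $2$ exactly when $D \equiv 1 \bmod 8$.

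Finally, I would run through the residues of $s$ modulo $8$ (noting $s \equiv 0, 4 \bmod 8$ are excluded by $4\nmid s$) and read off the behavior from $D \equiv -s \bmod 8$:
\begin{align*}
s \equiv 1 &\implies D \equiv 7 \bmod 8, \text{ ramified;} \\
s \equiv 2 &\implies D \equiv 6 \bmod 8, \text{ ramified;} \\
s \equiv 3 &\implies D \equiv 5 \bmod 8, \text{ inert;} \\
s \equiv 5 &\implies D \equiv 3 \bmod 8, \text{ ramified;} \\
s \equiv 6 &\implies D \equiv 2 \bmod 8, \text{ ramified;} \\
s \equiv 7 &\implies D \equiv 1 \bmod 8, \text{ split.}
\end{align*}
This matches the stated trichotomy exactly. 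There is no real obstacle here; the only subtlety worth highlighting in the write-up is the parity argument for $m$, which is what allows the square-free part of $-s$ to be detected cleanly by $s \bmod 8$.
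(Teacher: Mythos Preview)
Your proof is correct and follows essentially the same route as the paper's: both arguments reduce to the well-known squarefree case by observing that the square factor removed from $s$ (your $m$, the paper's $r/2^n$) must be odd since $4\nmid s$, hence its square is $1\bmod 8$, so the squarefree part is congruent to $\pm s\bmod 8$. Your write-up is slightly more explicit in spelling out the case-by-case translation, but the idea is identical.
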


\begin{proof}
The proposition is well-known in the case where $s$ is squarefree, that is, not divisible by the square of a prime. Write $t = r^2S$, where $r \in \Z$ and $S \in \Z$ is squarefree. We prove the proposition by showing $s \equiv S \bmod{8}$. We have $4^ns = r^2S$, and so $s = (r/2^n)^2S$, where $(r/2^n) \in \Z$. Moreover, $(r/2^n)$ is odd, for otherwise $4 \mid s$. Thus $(r/2^n)^2 \equiv 1 \bmod{8}$, and hence $s \equiv S \bmod{8}$, as desired. 
\end{proof}

This is already enough to handle the biquadratic case.

\begin{proposition} \label{biquad}
Let $f_c(x) = x^2 + c$ for $c =  -b^2 \in \Z \setminus \{-1, 0\}$, and consider the factorization given in \eqref{genfact}. We have
\begin{align*}
e = 4, f = 1, g = 1 & \qquad \text{iff}  \; \begin{cases} 
\text{$b \equiv \pm 3, \pm 7, \pm 13 \bmod{32}$} \\ 
\text{$b \equiv \pm 15 \bmod{64}$} \\
\text{$b = \pm 4^{j}(4r + 2)$, where $j \geq 0$ and $r \in \Z$} \\
\text{$b = \pm (1 + 4^{k}(4r + 3))$, where $k \geq 2$ and $r \in \Z$} \\
\text{$b = \pm (1 + 4^{k}(8r + 6))$, where $k \geq 2$ and $r \in \Z$}
\end{cases}\\
e = 2, f = 2, g = 1 & \qquad \text{iff}  \; 
\begin{cases} 
\text{$b \equiv \pm 4, \pm 5 \bmod{32}$} \\
\text{$b \equiv \pm 9 \bmod{64}$} \\
\text{$b = \pm 4^{k}(8r + 3)$, where $k \geq 2$ and $r \in \Z$} \\ 
\text{$b = \pm (1 + 4^{k}(8r + 5))$, where $k \geq 2$ and $r \in \Z$} \\
\text{$b = \pm(1 + 4^{k}(16r + 10))$, where $k \geq 2$ and $r \in \Z$} 
\end{cases} \\
e = 2, f = 1, g = 2 & \qquad \text{iff	}  \; 
\begin{cases}
\text{$b \equiv \pm 11, \pm 12 \bmod{32}$} \\
\text{$b \equiv \pm 23 \bmod{64}$} \\
\text{$b = \pm 4^{k}(8r + 1)$, where $k \geq 2$ and $r \in \Z$} \\
\text{$b = \pm (1 + 4^{k}(8r + 1))$, where $k \geq 2$ and $r \in \Z$} \\
\text{$b = \pm(1 + 4^{k}(16r + 2))$, where $k \geq 2$ and $r \in \Z$} 
\end{cases} \\
\end{align*}
and these are the only possibilities.
\end{proposition}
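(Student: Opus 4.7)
The plan is to exploit the biquadratic structure of $L_c$ and apply the principles governing decomposition and inertia groups recorded above as items A and B. By Proposition \ref{galois}, $L_c = \Q(\sqrt{b^2+b},\sqrt{b^2-b})$ with $\Gal(L_c/\Q) \cong \Z/2\Z \times \Z/2\Z$. Since $(b^2+b)(b^2-b) = b^2(b^2-1)$, the three quadratic subfields of $L_c$ are
$$K_1 = \Q(\sqrt{b(b+1)}), \qquad K_2 = \Q(\sqrt{b(b-1)}), \qquad K_3 = \Q(\sqrt{(b-1)(b+1)}).$$

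For any prime $\B$ of $L_c$ above $2$, the inertia and decomposition groups $I_\B \subseteq D_\B$ are subgroups of $G := \Gal(L_c/\Q)$. By item B, $K_i \subseteq L_c^{I_\B}$ iff $I_\B \subseteq \Gal(L_c/K_i)$ iff $2$ is unramified in $K_i$. Since the order-$2$ subgroups of $G$ are precisely the three subgroups $\Gal(L_c/K_i)$, this determines $I_\B$ completely from the data of which $K_i$ are unramified at $2$: $I_\B$ is trivial if all three are unramified, equals $\Gal(L_c/K_i)$ if only $K_i$ is unramified, and equals $G$ if none are. The analogous statement using item A determines $D_\B$ from the collection of $K_i$ in which $2$ splits completely. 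Because $|I_\B| = e$ and $|D_\B| = ef$, this pins down the triple $(e,f,g)$.

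The next step is to compute the factorization of $2$ in each of $K_1, K_2, K_3$ via Proposition \ref{quadraticcase}, applied to $\Q(\sqrt{m})$ for the appropriate non-square integer $m$ (equivalently, with $t = -m$). This amounts, for each of $b(b+1)$, $b(b-1)$, and $(b-1)(b+1)$, to computing the $2$-adic valuation and the residue class mod $8$ of the odd part. When $b$ is even, only $b$ itself contributes $2$-adic content; when $b$ is odd, exactly one of $b \pm 1$ is $\equiv 2 \pmod 4$ and the other carries the bulk of $v_2(b^2-1)$. In either case the three quantities are controlled cleanly by the $2$-adic expansion of the unique even element of $\{b-1,b,b+1\}$ (or both, when $b$ is odd).

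The main obstacle is the bookkeeping: one must enumerate the possible combinations of "split/inert/ramified" across the three subfields and repackage the resulting conditions on $b$ into the bulleted lists in the statement. I would organize the casework by splitting first on the parity of $b$, then on the exact $2$-adic valuation of whichever of $b$ or $b\pm 1$ is even, and finally on the residue mod $8$ of the associated odd part. Because these valuations can be arbitrarily large, each branch of the analysis produces either a finite-residue condition (such as $b \equiv \pm 4, \pm 5 \pmod{32}$ or $b \equiv \pm 23 \pmod{64}$) or an infinite family indexed by the valuation (such as $b = \pm 4^j(4r+2)$ or $b = \pm(1+4^k(4r+3))$), matching the structure of the statement. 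The concluding check is to verify that these conditions exhaust all $b \notin \{0,\pm 1\}$ and that the case in which $2$ is unramified in $L_c$ — requiring $2$ unramified in all three $K_i$ simultaneously — never occurs, so that exactly the three triples $(4,1,1)$, $(2,2,1)$, $(2,1,2)$ appear.
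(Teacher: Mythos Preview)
Your proposal is correct and follows essentially the same approach as the paper: both identify the three quadratic subfields $\Q(\sqrt{b^2+b})$, $\Q(\sqrt{b^2-b})$, $\Q(\sqrt{b^2-1})$, use properties A and B of the inertia and decomposition fixed fields to reduce the question to the splitting behavior of $2$ in each subfield, and then invoke Proposition~\ref{quadraticcase} together with a $2$-adic case analysis of $b$ (parity, valuation, residue mod~$8$) to produce the listed congruence classes and infinite families. The paper's proof organizes the casework by which subfield equals $L_c^{I_\B}$ rather than by the $2$-adic expansion of $b$, but the underlying computation is the same.
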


\begin{proof}
From part (3) of Proposition \ref{galois}, it follows that $L_c$ has precisely three quadratic subextensions, namely $\Q(\sqrt{b^2 + b}), \Q(\sqrt{b^2 - b})$, and $\Q(\sqrt{b^2 - 1})$. First we show that $e \geq 2$ in \eqref{genfact}. If $b \equiv 1, 2 \bmod{4}$, then $b^2 + b \equiv 2 \bmod{4}$, and it follows from Proposition \ref{quadraticcase} that $(2)$ ramifies in $\Q(\sqrt{b^2 + b})$, and hence in $L_c$. If $b \equiv 3 \bmod{4}$, then $b^2 - b \equiv 2 \bmod{4}$, and if $b \equiv 0 \bmod{4}$, then $-(b^2 - 1) \equiv 1 \bmod{4}$, and thus $(2)$ ramifies in $\Q(\sqrt{b^2 - b})$ and $\Q(\sqrt{b^2 - 1})$, respectively. Hence in all cases $(2)$ ramifies in $L_c$. 

We've thus shown that if $\B$ is a prime of $L_c$ lying over $(2)$, then either $L_c^{I_B} = \Q$ (equivalently, $(2)$ is totally ramified in $L_c$) or $L_c^{I_B}$ is a quadratic sub-extension of $\Q$. We examine the cases where the latter holds, and then the complement of those cases gives the totally ramified case. 

First note that $L_c^{I_B} = \Q(\sqrt{b^2 + b})$ if and only if $(2)$ either splits or is inert in $\Q(\sqrt{b^2 + b})$. By Proposition \ref{quadraticcase}, $(2)$ splits in $\Q(\sqrt{b^2 + b})$ precisely when 
\begin{equation} \label{toshow}
b^2 + b = 4^ns \qquad \text{where $n \geq 0, 4 \nmid s$, and $s \equiv 1 \bmod{8}$.}
\end{equation}
Write 
$$b = 4^kv \qquad \text{with $k \geq 0$ and $4 \nmid v$.} 
$$
and note that 
\begin{equation} \label{firsteqn}
b^2 + b = b(b+1) = 4^k(4^kv^2 + v).
\end{equation}
We now divide our considerations into two cases.

{\noindent \textbf{Case 1: $4$ divides $4^kv^2 + v$}}. Because $4 \nmid v$, this can only happen if $k = 0$ and $4 \mid (v+1)$. Hence we write $v = -1 + 4^\ell m$ with $\ell \geq 1$ and $4 \nmid m$. Then
$$b^2 + b = v(v+1) = 4^{\ell}[m(-1 + 4^\ell m)].$$
By assumption $4 \nmid m(-1 + 4^\ell m)$ and hence \eqref{toshow} holds if and only if $m(-1 + 4^\ell m) \equiv 1 \bmod{8}$. When $\ell = 1$, this is equivalent to $m(-1 + 4m) \equiv 1 \bmod{8}$, which holds if and only if $m \equiv 3 \bmod{8}$, i.e. $b \equiv 11 \bmod{32}$. When $\ell \geq 2$, we have $m(-1 + 4^\ell m) \equiv 1 \bmod{8}$ if and only if $-m \equiv 1 \bmod{8}$, i.e. $b = -1 + 4^{\ell}(8r + 7)$, with $\ell \geq 2$. 

\smallskip

{\noindent \textbf{Case 2: $4$ does not divide $4^kv^2 + v$}}.
In this case\eqref{toshow} holds if and only if $4^kv^2 + v \equiv 1 \bmod{8}$, i.e. 
\begin{equation} \label{new}
v \equiv 1 - (2^kv)^2 \bmod{8}.
\end{equation}
Now \eqref{new} holds if and only if we have one of the following:
\begin{enumerate}
\item $2^kv$ odd and $v \equiv 0 \bmod{8}$
\item $2^kv \equiv 2 \bmod{4}$ and $v \equiv 5 \bmod{8}$
\item $2^kv \equiv 0 \bmod{4}$ and $v \equiv 1 \bmod{8}$
\end{enumerate}
The first is impossible. The second holds precisely when $k = 1$ and $v \equiv 5 \bmod{8}$, i.e. $b \equiv 20 \bmod{32}$. The third holds precisely when $k \geq 2$ and $v \equiv 1 \bmod{8}$, i.e. $b = 4^k(8r + 1)$ for $r \in \Z$.


The cases where $(2)$ is inert in $\Q(\sqrt{b^2 + b})$ and $L_c^{I_B} = \Q(\sqrt{b^2 - b})$ are handled similarly. We thus turn to the case where $L_c^{I_B} = \Q(\sqrt{b^2 - 1})$. If $b \equiv 0 \bmod{4}$, then $-(b^2 - 1) \equiv 1 \bmod{4}$, and hence $(2)$ ramifies in $\Q(\sqrt{b^2 - 1})$. Thus we may assume $b \not\equiv 0 \bmod{4}$. If $4 \nmid b^2 - 1$, then from Proposition \ref{quadraticcase} we have $b^2 - 1 \equiv 1 \bmod{4}$, which is impossible. Hence we have $4 \mid b^2 - 1$, and thus $b$ is odd. Write 
$$b = 1 + 2^km, \quad \text{where $k \geq 1$ and $m$ is odd},$$ 
and note that 
$$
b^2 - 1 = 2^{k + 1} m (1 + 2^{k - 1}m).
$$
We now consider two cases.

{\noindent \textbf{Case 1: $k \geq 2$}}. In this case 
$(1 + 2^{k - 1}m)$ is odd, and so from Proposition \ref{quadraticcase} we must have $k$ odd and 
\begin{equation} \label{next}
m (1 + 2^{k - 1}m) \equiv 1 \bmod{8} \qquad m (1 + 2^{k - 1}m) \equiv 5 \bmod{8},
\end{equation}
corresponding to the split and inert case, respectively. Writing $k = 2\ell + 1$ for $\ell \geq 1$ and noting that $m^2 \equiv 1 \bmod{8}$, we have that \eqref{next} is equivalent to 
$$m \equiv 1 - 2^{2\ell} \bmod{8} \qquad m \equiv 5 - 2^{2 \ell} \bmod{8}.$$
When $\ell = 1$ we obtain $m \equiv 5 \bmod{8}$ and $m \equiv 1 \bmod{8}$, respectively; these correspond to $b \equiv 41 \bmod{64}$ and $b \equiv 9 \bmod{64}$, respectively. 
When $\ell \geq 2$ we have $m \equiv 1 \bmod{8}$ and $m \equiv 5 \bmod{8}$, respectively, which correspond to $b = 1 + 4^{\ell}(16r + 2)$ and $b = 1 + 4^{\ell}(16r + 10)$, respectively. 

{\noindent \textbf{Case 2: $k = 1$}}. In this case, $b = 1 + 2m$, so $b^2 - 1 = 4(m + m^2)$. If $m \equiv 1 \bmod{4}$, then $m^2 + m \equiv 2 \bmod{4}$, and hence $(2)$ ramifies in $\Q(\sqrt{b^2 - 1})$, a contradiction.
Thus we write $m = 3 + 4^{\ell}t$, with $\ell \geq 1$ and $4 \nmid t$, whence
$$
b^2 - 1 = 4(m + m^2) = 4^2(3 + 4^\ell t)(1 + 4^{\ell - 1}t).
$$
If $\ell \geq 2$, then $(3 + 4^\ell t)(1 + 4^{\ell - 1}t) \equiv 3 \bmod{4}$, which again gives a contradiction by Proposition \ref{quadraticcase}. Thus $\ell = 1$ and 
\begin{equation} \label{latest}
b^2 - 1 = 4^2(3 + 4t)(1 + t).
\end{equation}
If $t \equiv 1 \bmod{4}$, then $(3 + 4t)(1 + t) \equiv 2 \bmod{4}$, again giving a contradiction. If $t \equiv 2 \bmod{4}$, then write $t = 2s$ with $s$ odd, and note
$$(3 + 4t)(1 + t) = 3 + 7t + 4t^2 \equiv 3 + 6s \bmod{8}.$$
If $s \equiv 1 \bmod{4}$, then this yields $1$ modulo $8$, and hence (2) splits in $\Q(\sqrt{b^2 - 1}$. This gives $b \equiv 23 \bmod{64}$. If $s \equiv 3 \bmod{4}$, then (2) is inert in $\Q(\sqrt{b^2 - 1})$. This gives $b \equiv 55 \bmod{64}$. 

If $t \equiv 3 \bmod{4}$, then write $t = -1 + 4^qs$ with $q \geq 1$ and $4 \nmid s$. Then from \eqref{latest} we obtain
$$
b^2 - 1 = 4^{q+2}[(-1 + 4^{q+1}s)s],
$$
and since $4 \nmid s$ we have that $4 \nmid (-1 + 4^{q+1}s)s$. Now since $q \geq 1$ we have
$(-1 + 4^{q+1}s)s \equiv -s \bmod{8}$, and so from Proposition \ref{quadraticcase} we have that $(2)$ splits in $\Q(\sqrt{b^2 - 1})$ if $s \equiv 7 \bmod{8}$ and that $(2)$ is inert if $s \equiv 3 \bmod{8}$. These give $b = -1 + 4^{q+1}(16r + 14)$ and 
$b = -1 + 4^{q+1}(16r + 6)$, respectively.
\end{proof}

\section{Proof of Theorem \ref{main1} in the cases $e = 8, f = 1, g = 1$ and $e = 4, f = 2, g = 1$} \label{mainglob}



We now use our knowledge of subfields of $L_c$ to limit the possibilities for $L_c^{D_\B}$ and $L_c^{I_\B}$, where $\B$ is an ideal of $L_c$ lying over $(2)$. This obviously depends on the structure of $\Gal(L_c/\Q)$ which by Proposition \ref{galois} depends on whether $-c$ and $-(c+1)$ are squares in $\Z$. We discuss here the case where neither $-c$ nor $-(c+1)$ is a square and there is a unique prime ideal of $\Oo_{L_c}$ lying above $(2)$, thereby proving the corresponding parts of Theorem \ref{main1}. We leave the cases where one of $-c$ or $-(c+1)$ is a square to Section \ref{local}.

Because neither $-c$ nor $-(c+1)$ is a square, the subfield lattice of $L_c$ is given in Figure 1. 
Note that all subextensions are Galois save $\Q(\alpha)$ and $\Q(\beta)$, which together form a Galois-conjugacy class, and $\Q(\alpha+\beta)$, $\Q(\alpha - \beta)$, which form another Galois conjugacy class. 
One notices immediately from Figure 1 that each sub-extension of $L_c/\Q$ of degree at least two over $\Q$ contains one of the three quadratic sub-extensions 
\begin{equation} \label{quadratics}
\Q\left(\sqrt{-c}\right), \Q\left(\sqrt{-(c+1)}\right), \; \text{and} \; \Q\left(\sqrt{-(-c^2-c)}\right).
\end{equation}
The odd-looking expressions for the last two of these fields are helpful in light of Proposition \ref{quadraticcase}.



In order to prove a classification theorem such as Theorem \ref{main1}, one is confronted with the problem of selecting an indexing quantity. The most obvious choice in the present setting is the parameter $c$, but as one can see from the complexity of the statement of Theorem \ref{main1}, this leads to a dizzying number of cases. A more profitable choice is $L_c^{I_\B}$, though here too one encounters significant complexities, for instance in distinguishing between the cases where $[L_c^{I_\B} : \Q]$ has degree two or four. We thus adopt the following hybrid approach: in this section we use global methods to analyze the cases where $L_c^{D_\B} = \Q$, and in the following section we use local methods to analyze the remaining cases; in this latter section we index by the behavior of $L_c^{I_\B}$. The advantage to the supposition that $L_c^{D_\B} = \Q$ is that we may determine the factorization of $2\Oo_{L_c}$ by analyzing only the quadratic sub-extensions of $L_c$.

We remark that under the assumption $L_c^{D_\B} = \Q$ it is easy to see that $[L_c^{I_\B} : L_c^{D_\B}]$ must be 1 or 2. Indeed, If $\B$ is the unique prime of $\Oo_{L_c}$ above $2$, then $L_c^{I_\B}$ is a Galois extension of $L_c^{D_\B}$, and hence of $\Q$, necessarily cyclic. Hence $\Gal(L_c^{I_\B}/\Q)$ is a cyclic quotient of $D_4$, and thus has order $1$ or $2$. In Corollary \ref{residueprop} we use Newton polygons to prove the more general fact that $f \leq 2$ for all $c$ with $-c$ not a square. 

\medskip

\noindent \textbf{Case 1: $L_c^{D_\B} = L_c^{I_\B} = \Q$ for some prime $\B$ of $\Oo_{L_c}$ lying above 2.} 

This is equivalent to $e = 8, f = 1, g = 1$; in particular, $\B$ is the only prime of $\Oo_{L_c}$ lying over $2$. In light of property (B) on p. \pageref{inertia}, $L_c^{I_\B} = \Q$ holds if and only if $2$ ramifies in \textit{every} subextension of $L_c$. This in turn is equivalent to $2$ ramifying in each of the three quadratic sub-extensions given in \eqref{quadratics}; one direction of this is obvious, and the other follows from the fact that an extension is ramified at 2 if any of its sub-extensions is. 

Hence from Proposition \ref{quadraticcase} we may write 
\begin{equation} \label{initialcase1}
\text{$c  = 4^js$} \qquad \text{$c+1  = 4^ms'$},
\end{equation}
where $s$ and $s'$ are both $1$ or $2$ modulo $4$. Because $c$ and $c+1$ are relatively prime, at least one of $s$ and $s'$ must be $1$ modulo $4$, and because $2$ ramifies in $\Q(\sqrt{c^2 + c})$, we cannot have 
$-ss' \equiv 3 \bmod{4}$, and hence at most one of $s$ and $s'$ can be $1$ modulo $4$. We thus have two cases to consider:
\begin{enumerate} 
\item[(a)] $s \equiv 1 \bmod{4}, \quad s' \equiv 2 \bmod{4}$    
\item[(b)] $s \equiv 2 \bmod{4}, \quad s' \equiv 1 \bmod{4}$ 
\end{enumerate}
Suppose that we are in case (a), and assume first that $j = 0$ in \eqref{initialcase1}. Then we may write $c = 4r + 1$ for some $r \in \Z$, and we obtain $c+1 = 4r + 2$. It follows that $m = 0$ in \eqref{initialcase1}, and we get $s' = c+1 \equiv 2 \bmod{4}$.
If $j > 0$, then we have $m = 0$ and $s' = c + 1 \equiv 1 \bmod{4}$, contradicting our assumption of being in case (a). We have thus shown that case (a) holds if and only if $c \equiv 1 \bmod{4}$. 

Suppose now that we are in case (b), and assume first that $j = 0$ in \eqref{initialcase1}. Then $c + 1 \equiv 3 \bmod{4}$, which forces $m = 0$ in \eqref{initialcase1} and gives the contradiction $s' \equiv 3 \bmod{4}$. If $j > 0$, then must have $m = 0$. We may write $c = 4^j(4r + 2)$, and this gives $s' = c + 1 \equiv 1 \bmod{4}$. Hence case (b) holds if and only if 
$$c = 4^j(4r + 2) = 2^{2j + 1}m,$$
where $m = 2r + 1$ is an arbitrary odd number. 

\medskip

\noindent \textbf{Case 2: $L_c^{D_\B} = \Q$ and $[L_c^{I_\B} : L_c^{D_\B}] = 2$ for some prime $\B$ of $\Oo_{L_c}$ lying above 2.} 

This is equivalent to $e = 4, f = 2, g = 1$; in particular, $\B$ is the only prime of $\Oo_{L_c}$ lying over $2$. In light of property (B) on p. \pageref{inertia}, this occurs if and only if $2$ is inert in one of the quadratic sub-extensions of $L_c$, and $2$ ramifies in the other two such sub-extensions. 

\textbf{Sub-Case 2a: $L_c^{I_\B} = \Q(\sqrt{-c})$.}
From Proposition \ref{quadraticcase} this holds if and only if 
\begin{equation} \label{initialcase2}
\text{$c  = 4^js$ and $c+1  = 4^ms'$, where $s \equiv 3 \bmod{8}$ and $s' \equiv 1, 2 \bmod{4}$.}
\end{equation}
Note that under these assumptions, we have $-ss' \equiv s' \bmod{4}$, and hence $2$ ramifies in $\Q(\sqrt{c^2 + c})$. 

Assume first that $j = 0$ in \eqref{initialcase2}. Write $c = 8r + 3$, whence $c + 1 = 4(2r + 1)$. Thus $m = 1$ and $s' = 2r + 1 \equiv 1 \bmod{4}$ in \eqref{initialcase2}. This holds if and only if $r$ is even, i.e., $c \equiv 3 \bmod{16}$. If $j > 0$ in \eqref{initialcase2}, then write $c = 4^j(8r+3)$,and note that we automatically have $c + 1 \equiv 1 \bmod{4}$, and thus $s' = c+1 \equiv 1 \bmod{4}$. Hence Sub-Case 2a holds if and only if $c \equiv 3 \bmod{16}$ or $c = 2^{2j}(8r+3)$.

\textbf{Sub-Case 2b: $L_c^{I_\B} = \Q(\sqrt{-(c+1)})$.}
From Proposition \ref{quadraticcase} this holds if and only if 
\begin{equation} \label{initialcase3}
\text{$c  = 4^js$ and $c+1  = 4^ms'$, where $s \equiv 1, 2 \bmod{4}$ and $s' \equiv 3 \bmod{8}$.}
\end{equation}
Note that under these assumptions, we have $-ss' \equiv s \bmod{4}$, and hence $2$ ramifies in $\Q(\sqrt{c^2 + c})$. 

Assume first that $m = 0$ in \eqref{initialcase3}. Write $c + 1 = 8r + 3$, which is equivalent to $c \equiv 2 \bmod{8}$. In this situation $j = 0$ and $c = s = \equiv 2 \bmod{4}$, so all conditions in \eqref{initialcase3} are satisfied. 
If $m > 0$ in \eqref{initialcase3}, then we have $c \equiv 3 \bmod{4}$, and hence $j = 0$ and $s \equiv 3 \bmod{4}$, a contradiction. Therefore Sub-Case 2b holds if and only if $c \equiv 3 \bmod{16}$ or $c = 2^{2j}(8r+3)$ for some $j \geq 1$. To make this match better with the conclusion of Sub-Case 2c, we split the latter into two families: $c \equiv 12 \bmod{32}$ (i.e. $j = 1$) and $c = 2^{2j}(8r+3)$ for some $j \geq 2$.

\textbf{Sub-Case 2c: $L_c^{I_\B} = \Q(\sqrt{c^2 + c})$.}
From Proposition \ref{quadraticcase} this holds if and only if 
\begin{equation} \label{initialcase4}
\text{$c  = 4^js$ and $c+1  = 4^ms'$, where $-ss' \equiv 3 \bmod{8}$ and $s, s' \equiv 1, 2 \bmod{4}$.}
\end{equation}
Note that under these assumptions, we must have $s \equiv s' \equiv 1 \bmod{4}$. Moreover, we have $-ss' \equiv 3 \bmod{8}$ if and only if either 
\begin{enumerate}
\item[(a)] $s \equiv 1 \bmod{8}$ and $s' \equiv 5 \bmod{8}$, or 
\item[(b)] $s \equiv 5 \bmod{8}$ and $s' \equiv 1 \bmod{8}$.
\end{enumerate}

Suppose first that we are in case (a), and assume that $j = 0$ in \eqref{initialcase4}. Then $s = c \equiv 1 \bmod{4}$, so $c + 1 \equiv 2 \bmod{4}$, contradicting our supposition that $s' \equiv 5 \bmod{8}$. If $j > 0$, then we have $c = 4^j(8r + 1)$, and thus $c + 1 = 4^j(8r + 1) + 1$, giving $m = 0$ and 
$$
4^j(8r + 1) + 1 = s' \equiv 5 \bmod{8},
$$
which holds if and only if $j = 1$, i.e. $c \equiv 4 \bmod{32}$. 

Now suppose that we are in case (b), and assume that $j = 0$ in \eqref{initialcase4}. As in the preceding paragraph, we have $s' = c+1 \equiv 2 \bmod{4}$, a contradiction. If $j > 0$, then we have $c = 4^j(8r + 5)$, and thus $c + 1 = 4^j(8r + 5) + 1$, giving $m = 0$ and 
$$
4^j(8r + 5) + 1 = s' \equiv 1 \bmod{8},
$$
which holds if and only if $j \geq 2$. Therefore Sub-Case 2c holds if and only if $c \equiv 4 \bmod{32}$ or $c = 2^{2j}(8r-3)$ for some $j \geq 2$.

\section{Factorization of the ideal $2\Oo_{L_c}$: Local methods} \label{local}

We now make use of the fact that if $K$ is any number field, then we have
\begin{equation} \label{2adicdecomp}
K \otimes_\Q \Q_2 \cong \prod_{i = 1}^g K_{2,i},
\end{equation}
where the $K_{2,i}$ are finite extensions of the $2$-adic numbers $\Q_2$ given as follows: write $K = \Q(\gamma)$ and let $f$ be the minimal polynomial for $\gamma$ over $\Q$. Using the natural embedding $\Q \hookrightarrow \Q_2$, consider $f$ as having coefficients in $\Q_2$, and suppose that $f_1f_2 \cdots f_g$ is the factorization of $f$ into irreducibles in the ring $\Q_2[x]$. Then $K_{2,i} = \Q_2(\gamma_i)$, where $\gamma_i$ is a root of $f_i$ in the algebraic closure of $\Q_2$. (See \cite[pp. 25-26]{FT} for details.) Moreover the fields $K_{2,i}$ encode crucial information about ideal factorizations: if
$2\Oo_K = \p_1^{e_1} \cdots \p_g^{e_g}$, then the value group of $K_{2,i}$ is $(1/e_i)\Z$ and the residue extension degree of $K_{2,i}$ is the same as $|(\Oo_K/\p_i) : (\Z/2\Z)|$.

Throughout this section, $v : \Q_2 \to \Z$ denotes the $2$-adic valuation, and $| \cdot |$ denotes the 2-adic absolute value.
A very useful tool for understanding the fields $K_{2,i}$ given in \eqref{2adicdecomp} is the $2$-adic Newton polygon of a polynomial $f(x) = \sum_{i = 0}^n a_ix^i$, namely the polygon given by taking the lower convex hull of the points $(i, v(a_i))$. We assume the reader is familiar with the relationship between slopes of the $2$-adic Newton polygon of a polynomial and the $2$-adic valuation of the polynomial's roots (see e.g. \cite[Theorem 5.11]{jhsdynam}).

We begin by using Newton polygons to prove Theorem \ref{main0}:

\begin{theorem} \label{ram}
For all $c \in \Z \setminus \{0\}$, we have $e \geq 2$ in \eqref{genfact}.
\end{theorem}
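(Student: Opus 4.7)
The plan is to apply $2$-adic Newton polygon techniques to $f_c^2(x) = x^4 + 2cx^2 + c^2 + c$ and to exhibit, in every case, a root $\alpha \in \overline{\Q_2}$ of $f_c^2$ whose $2$-adic valuation lies in $\tfrac{1}{2}\Z \setminus \Z$. Any such $\alpha$ generates a ramified extension of $\Q_2$ that appears among the local completions of $L_c$ at primes above $(2)$, which forces $e \geq 2$ in \eqref{genfact}. The degenerate case $c = -1$ is handled directly: $f_{-1}^2(x) = x^2(x^2 - 2)$, so $L_{-1} = \Q(\sqrt{2})$ and $e = 2$.

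For $c \neq 0, -1$, I write $c = 2^a m$ with $m$ odd and analyze the Newton polygon of $f_c^2$ at the candidate vertices $(0, v(c^2+c))$, $(2, 1 + a)$, and $(4, 0)$. When $c$ is odd, setting $b = v(c+1) \geq 1$ gives a polygon whose slopes are $-1/4$ if $b = 1$ and otherwise include $-1/2$, so some root has valuation $1/4$ or $1/2$. When $c$ is even, the point $(2, a+1)$ lies strictly above the segment from $(0, a)$ to $(4, 0)$ (since $a + 1 > a/2$), so the polygon collapses to a single segment of slope $-a/4$; whenever $a \not\equiv 0 \pmod 4$ this slope has denominator $2$ or $4$, and ramification follows immediately.

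The principal obstacle is the residual case $a = 4e$ with $e \geq 1$, where all roots of $f_c^2$ have integer valuation $e$. My plan here is to substitute $\alpha = 2^e(1 + \gamma)$, which converts $f_c^2(\alpha) = 0$ into
\[
\gamma^4 + 4\gamma^3 + (6 + 2^{2e+1}m)\gamma^2 + (4 + 2^{2e+2}m)\gamma + (1 + m + 2^{2e+1}m + 2^{4e}m^2) = 0,
\]
and then to apply the Newton polygon a second time. Using $e \geq 1$, the coefficients from degree $4$ down to $1$ have $2$-adic valuations $0$, $2$, $1$, $2$, while the constant term has valuation $1$ if $m \equiv 1 \pmod 4$ and valuation at least $2$ if $m \equiv 3 \pmod 4$. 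In the first sub-case, the Newton polygon is a single segment from $(0,1)$ to $(4,0)$ of slope $-1/4$; in the second, it contains the segment from $(2,1)$ to $(4,0)$ of slope $-1/2$. In either sub-case, some root $\gamma$ has non-integral $2$-adic valuation, so $\Q_2(\alpha) = \Q_2(\gamma)$ is ramified over $\Q_2$ and $e \geq 2$ in $L_c$. Since $[L_c : \Q]$ is a power of $2$, this ramification is automatically wild, yielding Theorem \ref{main0} as well.

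The bulk of the difficulty is in this last case; the earlier cases follow essentially by inspection of the first Newton polygon, while handling $a \equiv 0 \pmod 4$ requires the explicit substitution and a careful reading of how $m \bmod 4$ controls the position of the new constant-term vertex.
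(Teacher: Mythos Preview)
Your proof is correct but follows a different route from the paper's. Both arguments handle $c=-1$ directly and use the Newton polygon of $f_c^2$ for odd $c$ (the paper phrases this as $c\equiv 1,3\bmod 4$). The divergence is in the even case. The paper abandons Newton polygons there entirely: for $c\equiv 2\bmod 4$ it observes that $(2)$ ramifies in the quadratic subfield $\Q(\sqrt{-c})$, and for $c\equiv 0\bmod 4$ it first checks that $-(c+1)$ cannot be a perfect square and then notes that $(2)$ ramifies in $\Q(\sqrt{-(c+1)})$; the case where $-c$ is itself a square is handled separately by reference to the biquadratic computation in Proposition~\ref{biquad}. Your argument instead stays entirely within the Newton polygon framework, paying for this with the substitution $\alpha=2^e(1+\gamma)$ and a second polygon when $v_2(c)\equiv 0\bmod 4$. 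The paper's route is shorter and leans on the global subfield lattice from Section~\ref{degree} together with Proposition~\ref{quadraticcase}; yours is self-contained and uniform in $c$, treating the reducible and irreducible cases of $f_c^2$ simultaneously without ever invoking the quadratic subfields of $L_c$.
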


\begin{proof}
When $c = -1$, we have $L_c = \Q(\sqrt{2})$, which has $e = 2$. When $-c$ is a square in $\Z$ and $c \not\in \{-1,0\}$, the result follows from the first paragraph of the proof of Proposition \ref{biquad}. 

Assume now that $-c$ is not a square in $\Z$. When $c \equiv 3 \bmod{4}$, 
the Newton polygon of $f_c^2 = x^4 + 2cx^2 + c^2 + c$ contains the segment connecting $(4,0)$ and $(2,1)$ which has slope $-1/2$. Thus the value group of $L_c$ contains $(1/2)\Z$, and it follows that $e \geq 2$. When $c \equiv 1 \bmod{4}$, it follows similarly that $e \geq 4$. When $c \equiv 2 \bmod{4}$, we have from Proposition \ref{quadraticcase} that $(2)$ ramifies in $\Q(\sqrt{-c})$, and hence in $L_c$. Suppose now that $c \equiv 0 \bmod{4}$. If $-(c+1)$ is a square in $\Z$, then we may write $c = -(b^2 + 1)$, which cannot be congruent to $0$ modulo $4$. Therefore $-(c+1)$ is not a square in $\Z$, and $\Q(\sqrt{-(c+1)})$ is a quadratic extension of $\Q$. But $c+1 \equiv 1 \bmod{4}$, and it follows from Proposition \ref{quadraticcase} that $(2)$ ramifies in $\Q(\sqrt{-(c+1)})$, and hence in $L_c$.
\end{proof}




\begin{corollary} \label{residueprop}
For all $c \in \Z$, we have $f \leq 2$ in \eqref{genfact}.
\end{corollary}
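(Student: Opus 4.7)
The plan is to combine the basic identities $|I_\B| = e$, $|D_\B| = ef$, and $efg = [L_c:\Q]$ with two inputs: the lower bound $e \geq 2$ just established in Theorem \ref{ram}, and the standard fact that the residue-field extension $D_\B/I_\B$ is a cyclic extension of $\Z/2\Z$. By Proposition \ref{galois}, $\Gal(L_c/\Q)$ is isomorphic to $\Z/2\Z \times \Z/2\Z$, $\Z/4\Z$, or $D_4$. In the first two cases $[L_c:\Q] = 4$, so $fg \leq 2$, and $f \leq 2$ follows immediately from $e \geq 2$.

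For the remaining $D_4$ case with $[L_c:\Q] = 8$, I plan to argue group-theoretically that no admissible pair $(D_\B, I_\B)$ can yield $f > 2$. If $g \in \{2, 4\}$, then $|D_\B| \leq 4$, and hence $f = |D_\B|/e \leq 2$ by $e \geq 2$. The remaining sub-case $g = 1$ forces $D_\B = D_4$, and then $I_\B$ must be a normal subgroup of $D_4$ with cyclic quotient.

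The main obstacle is classifying such $I_\B$. Writing $D_4 = \langle r, s \mid r^4 = s^2 = 1,\ srs = r^{-1}\rangle$, the normal subgroups of $D_4$ are $\{1\}$, the center $Z(D_4) = \{1, r^2\}$, the three index-2 subgroups $\langle r\rangle$, $\langle r^2, s\rangle$, $\langle r^2, rs\rangle$, and $D_4$ itself. Of these, $Z(D_4)$ is inadmissible because $D_4/Z(D_4) \cong \Z/2\Z \times \Z/2\Z$ is not cyclic, and $\{1\}$ is excluded by $e \geq 2$. Each of the remaining four choices gives $e \in \{4, 8\}$ and hence $f \in \{1, 2\}$, completing the argument.
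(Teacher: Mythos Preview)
Your argument is correct and uses essentially the same ingredients as the paper's proof: the bound $e \geq 2$ from Theorem~\ref{ram} together with the fact that $D_\B/I_\B$ is cyclic, combined with the subgroup structure of $D_4$. The only difference is organizational: the paper argues by contradiction (assuming $f \geq 4$ forces $e=2$, $f=4$, $g=1$, hence a cyclic quotient of $D_4$ of order $4$, which does not exist), whereas you carry out a full case analysis on $g$ and on the normal subgroups of $D_4$; both routes amount to the same observation that $D_4$ has no cyclic quotient of order greater than $2$.
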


\begin{proof}
When $[L_c : \Q] \leq 4$, this follows immediately from Theorem \ref{ram}. Thus assume that $[L_c : \Q] = 8$, and hence $\Gal(L_c/\Q) \cong D_4$. Suppose that $f \geq 4$ in \eqref{genfact}. Then from Theorem \ref{ram} we must have $e = 2, f = 4,$ and $g = 1$. Because $g = 1$, there is a unique prime $\B$ of $\Oo_{L_c}$ above $2$. Because $L_c^{D_\B} = \Q$, we have that $L_c^{I_\B}$ is a Galois extension of $\Q$, necessarily cyclic, and of degree at least 4. Hence $\Gal(L_c^{I_\B}/\Q)$ is a cyclic quotient of $D_4$ of order at least four, which does not exist.
\end{proof}

We now have all the tools we need to describe the factorization of $2\Oo_{L_c}$ in the case where $-c$ is not a square in $\Z$ but $-(c+1)$ is.

\begin{proposition} \label{cyclic}
Let $f_c(x) = x^2 + c$ where $-c$ is not a square in $\Z$ but $-(c+1)$ is a square in $\Z$, and write $c = -(b^2 + 1)$ for $b \in \Z$. In the factorization given in \eqref{genfact}, we have 
\begin{align*}
e = 4, f = 1, g = 1 & \; \text{iff $b$ is odd,}  \\
e = 2, f = 2, g = 1 & \; \text{iff $b \equiv 2 \bmod{4}$,} \\
e = 2, f = 1, g = 2 & \; \text{iff $b \equiv 0 \bmod{4}$,}  
\end{align*}
and these are the only possibilities.
\end{proposition}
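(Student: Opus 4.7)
The plan is to exploit the cyclic-of-order-4 structure from Proposition \ref{galois}. The subgroup lattice of $\Z/4\Z$ has only three subgroups, so the decomposition and inertia groups at any prime $\B$ of $\Oo_{L_c}$ above $2$ are uniquely determined by their orders; since $[L_c:\Q]=4$ and $efg=4$, only the three listed triples $(e,f,g)$ are possible, and each of them corresponds to a unique lattice configuration. In particular, everything is determined by the behavior of $2$ in the unique quadratic subfield $F := \Q(\sqrt{-c})$.

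First I would compute the congruence class of $c=-(b^2+1)$ modulo $8$ in each of the three cases $b\bmod 4$. If $b$ is odd, then $b^2\equiv 1\pmod 8$, so $c\equiv 6\pmod 8$; if $b\equiv 2\pmod 4$, then $b^2\equiv 4\pmod{32}$, so $c\equiv 3\pmod 8$; and if $b\equiv 0\pmod 4$, then $b^2\equiv 0\pmod{16}$, so $c\equiv 7\pmod 8$. Applying Proposition \ref{quadraticcase} with $t=c$ (so $n=0$ and $s=c$ in each case) then tells us that $2$ ramifies, is inert, or splits in $F$, respectively.

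Now I would translate these three behaviors in $F$ into the global factorization in $L_c$, using the tower $\Q\subset F\subset L_c$ with $[L_c:F]=2$. If $b$ is odd, then $2$ ramifies in $F$, so $L_c^{I_\B}$ cannot contain $F$; since $F$ is the unique quadratic subfield, $L_c^{I_\B}=\Q$, giving $(e,f,g)=(8/2,1,1)=(4,1,1)$ after taking into account $[L_c:\Q]=4$. If $b\equiv 2\pmod 4$, then $2$ is inert in $F$, so the unique prime $\p$ of $F$ above $2$ has $(e(\p/2),f(\p/2))=(1,2)$; by Theorem \ref{ram} we need $e(\B/2)\ge 2$, which forces $e(\B/\p)=2$ and hence $f(\B/\p)=g(\B/\p)=1$, yielding $(e,f,g)=(2,2,1)$. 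If $b\equiv 0\pmod 4$, then $2$ splits in $F$ as $\p_1\p_2$, and the same argument applied to each $\p_i$ gives $e(\B/\p_i)=2$, so there are exactly two primes of $L_c$ above $2$ and $(e,f,g)=(2,1,2)$.

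None of the steps presents a real obstacle: the only mildly delicate point is the mod-$8$ bookkeeping to justify applying Proposition \ref{quadraticcase}, and the structural step confirming that in a cyclic quartic extension the ramification behavior in $L_c$ is already pinned down by the behavior in its unique quadratic subfield together with the lower bound $e\ge 2$ from Theorem \ref{ram}. Since the three cases in the proposition exhaust all residues of $b$ modulo $4$, the final clause that these are the only possibilities is automatic.
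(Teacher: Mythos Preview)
Your proposal is correct and follows essentially the same approach as the paper: use the cyclic-of-order-4 structure and its unique quadratic subfield $\Q(\sqrt{-c})$, invoke Theorem~\ref{ram} for the lower bound $e\ge 2$, and determine everything from the behavior of $2$ in $\Q(\sqrt{-c})$ via Proposition~\ref{quadraticcase}. The paper is terser---it leaves the mod-$8$ computation of $c$ in terms of $b$ implicit---but your explicit case split on $b\bmod 4$ and the tower argument for multiplicativity of $e,f,g$ are exactly what is needed to fill in the details.
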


\begin{proof}
By part (2) of Proposition \ref{galois}, $L_c$ is a $\Z/4\Z$-extension of $\Q$ with unique quadratic sub-extension $\Q(\sqrt{-c})$. If $2$ ramifies in $\Q(\sqrt{-c})$, then from property (B) on p. \pageref{inertia} we have that $L_c^{I_\B} = \Q$ for some prime $\B$ of $\Oo_{L_c}$ lying above $2$, and hence $e = 4$. If $2$ splits (resp. is inert) in $\Q(\sqrt{-c})$, then by Theorem \ref{ram} we have $e = 2$, $f = 1$, $g = 2$ (resp. $e = 2$, $f = 2$, $g = 1$). The Proposition now follows from Proposition \ref{quadraticcase}.
\end{proof}

We need to study the squaring map on $\Q_2$, and especially its inverse. First we record the well-known fact that the squaring map sends $1 + 2\Z_2$ surjectively onto $1 + 8\Z_2$. More generally, if $S$ is the squaring map, then 
\begin{equation} \label{square image}
S(\Q_2) = \{0\} \cup \bigcup_{n \in \Z} 4^n(1 + 8\Z_2)
\end{equation}
See for instance \cite[p. 85]{FT}.

We also have a useful reformulation of Proposition \ref{quadraticcase}: let $t \in \Q_2$ with $-t$ not a square in $\Q_2$, and write $t = 4^ns$, with $s \in \Z_2$ and $v(s) \in \{0,1\}$. Then 
\begin{equation} \label{localquadraticcase}
\Q_2(\sqrt{-t}) = 
\begin{dcases}
    								\Q_2 & \text{if $s \equiv 7 \bmod{8},$}\\
    								\text{an unramified quadratic extension of $\Q_2$} & \text{if  $s \equiv 3 \bmod{8},$}\\
                                    \text{a ramified quadratic extension of $\Q_2$} & \text{otherwise.}
\end{dcases}
\end{equation}

The following proposition follows from Newton's binomial theorem, and will be useful in our later calculations.

\begin{proposition} \label{sqrt expansion}
If $v(x) > 2$, then in $\Q_2$ we have
$$\sqrt{1 + x} = 1 + \frac{1}{2}x - \frac{1}{8}x^2 + s2^r,$$
where $s \in \Z_2$ and $r = 3v(x) - 4$. 
\end{proposition}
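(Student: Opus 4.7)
The plan is to use the formal binomial series for $\sqrt{1+x}$ in $\Q_2$, arranging the three terms shown in the proposition as the $k = 0, 1, 2$ summands and bounding the tail. Since $v(x) > 2$ forces $v(x) \geq 3$ and hence $1 + x \in 1 + 8\Z_2$, the surjectivity of squaring from $1 + 2\Z_2$ onto $1 + 8\Z_2$ recorded just above gives a unique square root of $1 + x$ in $1 + 2\Z_2$, which I will take as $\sqrt{1+x}$.

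Next I would use the closed form
$$\binom{1/2}{k} = \frac{(-1)^{k-1}}{4^k (2k-1)} \binom{2k}{k} \quad (k \geq 1),$$
together with Kummer's theorem giving $v\bigl(\binom{2k}{k}\bigr) = s_2(k)$, where $s_2(k)$ denotes the number of $1$'s in the binary expansion of $k$; since $2k - 1$ is a $2$-adic unit, this yields
$$v\!\left(\binom{1/2}{k} x^k\right) = k\bigl(v(x) - 2\bigr) + s_2(k).$$
Because $v(x) - 2 \geq 1$, these valuations tend to $\infty$, so the series $\sum_k \binom{1/2}{k} x^k$ converges in $\Q_2$; its formal square equals $1 + x$, its sum lies in $1 + 2\Z_2$, and hence it equals the $\sqrt{1+x}$ chosen above. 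The terms $k = 0, 1, 2$ contribute exactly $1 + \tfrac{1}{2} x - \tfrac{1}{8} x^2$.

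It remains to show every term with $k \geq 3$ has valuation at least $r = 3v(x) - 4$. For $k = 3$ the valuation is $3(v(x) - 2) + s_2(3) = 3v(x) - 4 = r$. For $k = 4$ it is $4(v(x) - 2) + s_2(4) = 4v(x) - 7$, which is $\geq r$ precisely when $v(x) \geq 3$; this is the pinch point that explains the hypothesis $v(x) > 2$. For $k \geq 5$ one has $(k - 3)(v(x) - 2) \geq 2 \geq 2 - s_2(k)$, which rearranges to $k(v(x)-2) + s_2(k) \geq 3v(x) - 4$. Summing, the tail has valuation $\geq r$, hence can be written as $s \cdot 2^r$ with $s \in \Z_2$. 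The only delicate step is this valuation count, and the main thing to watch is the $k = 4$ case, where the hypothesis $v(x) > 2$ is really used.
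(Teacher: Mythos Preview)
Your proof is correct and follows exactly the approach the paper indicates: the paper simply says the proposition ``follows from Newton's binomial theorem'' without further detail, and you have carried out precisely that computation, supplying the explicit valuation bounds on the tail that the paper omits.
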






\section{Proof of the remaining cases of Theorem \ref{main1}} 
\label{mainloc}

We now use the work in Section \ref{local} to examine the cases of Theorem \ref{main1} where $(e,f,g) = (2, 1, 4)$ and $(e,f,g) = (2,2,2)$. Let $\B$ be a prime of $\Oo_{L_c}$ lying above 2. In both of the cases we wish to study, $L_c^{I_\B}$ has degree 4 over $\Q$. What separates them is that in the first case, $L_c^{I_\B} = L_c^{D_\B}$, while in the second, $L_c^{D_\B}$ is a sub-extension of $L_c^{I_\B}$ of degree 2 over $\Q$. Up to Galois conjugation, there are three possibilities for $L_c^{I_\B}$, namely $\Q(\alpha)$, $\Q(\alpha + \beta)$, and $\Q(\sqrt{-c}, \sqrt{-(c+1)})$. 

\textbf{Case 1: $L_c^{I_\B} = \Q(\alpha)$.} \label{case1}
This means that one of the primes of $\Oo_{\Q(\alpha)}$ lying above 2 does not ramify. From \eqref{2adicdecomp} and the discussion following, this occurs if and only if one of the direct summands of $\Q(\alpha) \otimes_\Q \Q_2$ has value group $\Z$. But from Corollary \ref{residueprop} we know that $f \leq 2$, so either one summand is $\Q_2$ and we are in the $(2, 1, 4)$ case, or one summand is an unramified quadratic extension of $\Q_2$, and we are in the $(2,2,2)$ case.
Thus over $\Q_2$, either $f$ has a linear factor that generates a summand isomorphic to $\Q_2$, or $f$ has an irreducible quadratic factor that generates an unramified extension of $\Q_2$. We thus wish to study the roots of $f$ over $\Q_2$, and determine when one of them lies in $\Q_2$ or one of them generates an unramified quadratic extension of $\Q_2$. 

Note further that we must have $\Q(\sqrt{-c}) \subseteq L_c^{D_\B}$, since the former is the unique degree-2 sub-extension of $\Q(\alpha)$, and the latter has degree at least two. Hence $2$ must split in $\Q(\sqrt{-c})$, which is equivalent to $\Q_2(\sqrt{-c}) = \Q_2$.

Now from \eqref{square image}, we have that
$\sqrt{-c} \in \Q_2$ if and only if 
$$-c = 4^n(1 + 8b) \; \text{with $b \in \Z$ and $n \geq 0$.}$$ 
Assume this is indeed the case. From Proposition \ref{sqrt expansion}, we have that in $\Z_2$,
\begin{align}
\nonumber -c \pm \sqrt{-c} = 4^n(1 + 8b) \pm \sqrt{4^n(1 + 8b)} & = 
2^n\left(2^n(1 + 8b) \pm \sqrt{1 + 8b}\right) \\
\label{sqroots} 
& \equiv  2^n\left(2^n(1 + 8b) \pm (1 + 4b - 8b^2) \right) \bmod{2^{n+5}}.
\end{align}
Denote the numbers in \eqref{sqroots} by $\alpha_1$ and $\beta_1$, respectively. Note that the roots of $f(x)$ over $\Q_2$ can be taken to be $\pm \sqrt{\alpha_1}$ and $\pm \sqrt{\beta_1}$. We wish to study the extensions $\Q_2(\sqrt{\alpha_1})$ and $\Q_2(\sqrt{\beta_1})$, using \eqref{localquadraticcase}.

If $n$ is odd, both $\alpha_1$ and $\beta_1$ have odd 2-adic valuation, and hence from \eqref{localquadraticcase} both $\Q_2(\sqrt{\alpha_1})$ and $\Q_2(\sqrt{\beta_1})$ are ramified, meaning that every root of $f$ over $\Q_2$ generates a ramified extension of $\Q_2$, contrary to our original supposition. Thus we write $n = 2t$, and without loss of generality we have
\begin{align*}
\alpha_1 \equiv 2^{2t}(1 + 8b) + (1 + 4b - 8b^2) \equiv (2^{2t} + 1) + 4b(2^{2t+1} + 1) - 8b^2 \bmod{32}, \\
\beta_1 \equiv 2^{2t}(1 + 8b) - (1 + 4b - 8b^2)
\equiv (2^{2t} - 1) + 4b(2^{2t+1} - 1) + 8b^2 \bmod{32}. 
\end{align*}

We consider various values of $t$ separately. If $t = 0$, then 
\begin{align*}
\alpha_1 & \equiv 2 \bmod{4}, \\
\beta_1 & \equiv 4b + 8b^2 \bmod{32}. 
\end{align*}
Thus $\Q_2(\sqrt{\alpha_1})$ is ramified by \eqref{localquadraticcase}. 

On the other hand $\Q_2(\sqrt{\beta_1}) = \Q_2$ if and only if $\Q_2(\sqrt{-(-\beta_1})) = \Q_2$, which from \eqref{localquadraticcase} occurs if and only if $-\beta_1 = 4^n(-1 + 8r')$ for some $r' \in \Z_2$, or equivalently $b + 2b^2 = 4^n(1 + 8r)$ for some $r \in \Z_2$. Recall that in fact $b \in \Z$, and write $b = 2^jm$, where $m$ is odd. Then 
\begin{equation} \label{bthing}
b + 2b^2 = 2^j(m + 2^{j+1}m^2),
\end{equation}
so $j$ must be even and $m-1 + 2^{j+1}m^2$ must be a multiple of 8. Hence either $j = 0$ and $m \equiv 7 \bmod{8}$ or $j \geq 2$ and $m \equiv 1 \bmod{8}$. These are equivalent to, respectively, $b = 8r + 7$ and $b = 4^{k}(8r+1)$ for $k \geq 1$. Finally, these correspond to 
$c = -1 - 8(8r+7)$, or $c = 7 + 64r$ for some $r \in \Z$; and $c = -1 - 4^k(64r+8)$, or $c = -1 + 4^k(64r - 8)$ for some $k \geq 1, r \in \Z$.  

Similarly, we have from \eqref{localquadraticcase} that $\Q_2(\sqrt{\beta_1})$ is an unramified quadratic extension of $\Q_2$ if and only if $b + 2b^2 = 4^n(5 + 8r)$ for some $r \in \Z_2$. Proceeding as in the previous paragraph, we now have from \eqref{bthing} that $j$ must be even and $m-5 + 2^{j+1}m^2$ must be a multiple of 8. Hence either $j = 0$ and $m \equiv 3 \bmod{8}$ or $j \geq 2$ and $m \equiv 5 \bmod{8}$. These are equivalent to, respectively, $b = 8r + 3$ and $b = 4^{k}(8r+5)$ for $k \geq 1$. Finally, these correspond to 
$c = -1 - 8(8r+3)$, or $c = 39 + 64r$ for some $r \in \Z$; and $c = -1 - 4^k(64r+40)$, or $c = -1 + 4^k(64r + 24)$ for some $k \geq 1, r \in \Z$.   

If $t \geq 1$, then $v(\alpha_1) = v(\beta_1) = 0$. Thus from \eqref{localquadraticcase} we need only consider $\alpha_1$ and $\beta_1$ modulo 8. We have
\begin{align*}
\alpha_1 & \equiv 2^{2t} + 1 + 4b \bmod{8}, \\
\beta_1 & \equiv 2^{2t} - 1 - 4b \bmod{8}. 
\end{align*}

When $t = 1$, we have $\beta_1 \equiv 3 + 4b \bmod{8}$, and writing $\Q_2(\sqrt{\beta_1})$ as $\Q_2(\sqrt{-(-\beta_1)})$, we have from \eqref{localquadraticcase} that $\Q_2(\sqrt{\beta_1})$ is a ramified extension of $\Q_2$. On the other hand, we have $\alpha_1 \equiv 5 + 4b \bmod{8}$, and thus from \eqref{localquadraticcase} we see that $\Q_2(\sqrt{\alpha_1}) = \Q_2$ if $4b \equiv 4 \bmod{8}$ (i.e. $b$ is odd) and $\Q_2(\sqrt{\alpha_1})$ is an unramified quadratic extension of $\Q_2$ if $b$ is even. These correspond, respectively, to the cases where $-c = 16(1 + 8(2r+1))$ and $-c = 16(1 + 16r)$, which are equivalent to $c \equiv 112 \bmod{256}$ and $c \equiv 240 \bmod{256}$. 


When $t \geq 2$, we again have that $\beta_2 \equiv 3$ or $7$ modulo 8, and hence $\Q_2(\sqrt{\beta_1})$ is a ramified extension of $\Q_2$. On the other hand,
$\Q_2(\sqrt{\alpha_1})$ is $\Q_2$ if $b$ is even and an unramified quadratic extension of $\Q_2$ if $b$ is odd. This occurs, respectively, when $-c = 4^{2t}(1 + 16r)$ and $-c = 4^{2t}(1 + 8(2r+1))$, or replacing $r$ by $-r$, $c = 4^{2t}(-1 + 16r)$ and $c = 4^{2t}(7 + 16r)$.


\textbf{Case 2: $L_c^{I_\B} = \Q(\alpha+\beta)$.}
Let us begin by remarking that $L_c^{I_\B} = \Q(\alpha+\beta)$ implies that $(2)$ ramifies in $\Q(\sqrt{-c})$, since otherwise every prime in $\Q(\sqrt{-c})$ lying over $2$ would occur without ramification, implying that $\Q(\sqrt{-c}) \subset L_c^{I_\B}$, which is false. The same argument applies to $\Q(\sqrt{-(c+1)})$. Hence from \eqref{localquadraticcase} we may write
\begin{equation} \label{initial}
c  = 4^jb_0\qquad 
c+1  = 4^mb_0'
\end{equation}
where $b_0, b_0'$ are integer not equivalent to $0$ or $3$ modulo 4, and $j, m$ are non-negative integers. It follows from Corollary \ref{residueprop} that either $L_c^{D_\B} = L_c^{I_\B}$ or $L_c^{D_\B}$ is a quadratic exention of $\Q$ lying in $L_c^{I_\B}$. But $\Q(\sqrt{c^2 + c})$ is the only quadratic extension of $\Q$ lying in $\Q(\alpha + \beta)$, and hence $\Q(\sqrt{c^2 + c}) \subseteq L_c^{D_\B}$. Hence $2$ must split in $\Q(\sqrt{c^2 + c})$, which is equivalent to $\sqrt{c^2 + c} \in \Q_2$. From \eqref{initial} we have
\begin{equation} \label{blah}
c^2 + c = 4^{j+m}b_0b_0'.
\end{equation}
If one of $b_0$ or $b_0'$ is even, then the other must be odd, because $c$ and $c+1$ are relatively prime. But then from \eqref{blah} $c^2 + c$ has odd 2-adic valuation, and hence cannot be a square in $\Q_2$. 

Therfore $b_0, b_0' \equiv 1 \bmod{4}$, so we may write
\begin{equation} \label{second}
c  = 4^j(1 + 4b_1)\qquad 
c+1  = 4^m(1 + 4b_1'),
\end{equation}
and \eqref{blah} becomes
$$
c^2 + c = 4^{j+m}(1 + 4b_1)(1 + 4b_1') = 4^{j+m}(1 + 4(b_1 + b_1') + 16b_1b_1')).
$$
However, from \eqref{square image} and our knowledge that $\sqrt{c^2 + c} \in \Q_2$, we must have
$1 + 4(b_1 + b_1') + 16b_1b_1') \equiv 1 \bmod{8}$, and hence 
\begin{equation} \label{parity}
\text{$b_1$ and $b_1'$ have the same parity.}
\end{equation}
Let us return to \eqref{second}. Since $c$ and $c+1$ are relatively prime, exactly one of $j$ and $m$ is zero. Suppose that $j = 0$. Then $c + 1 \equiv 2 \bmod{4}$, contradicting \eqref{second}. Therefore $m = 0$, and we can write
\begin{equation} \label{third}
c+1 = 1 + 4(4^{j-1}(1 + 4b_1)),
\end{equation}
so that $b_1' = 4^{j-1}(1 + 4b_1)$. If $j = 1$ then $b_1'$ is odd, so by \eqref{parity} we have that $b_1$ is odd. This is equivalent to $c = 4(5 + 8b)$ for some $b \in \Z$. If $j \geq 2$, then $b_1'$ is even, so from \eqref{parity} we have that $b_1$ is even as well, which is equivalent to $c = 4^{n+2}(1 + 8b)$ for integers $n$ and $b$ with $n \geq 0$. 

We summarize what we have so far: the assumption that 
$L_c^{I_\B} = \Q(\alpha+\beta)$ implies that $(2)$ ramifies in $\Q(\sqrt{-c})$ and $\Q(\sqrt{-(c+1)})$ and splits in $\Q(\sqrt{c^2 + c})$ (though the reverse implication is false). This is equivalent to the assertion that one of the following holds:
\begin{enumerate} 
\item[I.] $c = 4(5 + 8b)$ or
\item[II.] $c = 4^{n+2}(1 + 8b)$,
\end{enumerate}
where $n, b \in \Z$ and $n \geq 0$. We emphasize that for the $c$-values in I and II, there are three possible outcomes: $L_c^{I_\B} = L_c^{D_\B} = \Q(\sqrt{c^2 + c})$; 
$L_c^{I_\B} = \Q(\alpha+\beta)$ and $L_c^{D_\B} = \Q(\sqrt{c^2 + c})$; or
$L_c^{I_\B} = L_c^{D_\B} = \Q(\alpha + \beta)$. These 
correspond to the (4, 1, 2), (2, 2, 2), and (2, 1, 4) cases of Theorem \ref{main1}, respectively. They occur, respectively, if and only if every root of $g(x) = x^4 + 4cx^2 - 4c$ generates a ramified quadratic extension of $Q_2$; one root of $g(x)$ generates an unramified quadratic extension of $Q_2$; and one root of $g(x)$ lies in $Q_2$. This follows from the same reasoning given in the first paragraph of Case 1 on p. \pageref{case1}.
Thus we now determine for which $c$-values in cases I and II we obtain a root of $g(x)$ that either generates an unramified quadratic extension of $\Q_2$ or lies in $\Q_2$. The quadratic formula gives that the roots of $g(x)$ are $-2c \pm 2 \sqrt{c^2 + c}.$

We first consider case I. Note that $$c^2 + c = 4(1 + 8(13 + 41b + 32b^2)),$$
and hence from Proposition \ref{sqrt expansion} we have
\begin{equation*}
\sqrt{c^2+ c} = 2 (1 + 4(13 + 41b) + 8b_3),
\end{equation*}
for some $b_3 \in \Z_2$. 
Let $\alpha_1 = -2c + 2 \sqrt{c^2 + c}$ and $\beta_1 = 
-2c - 2 \sqrt{c^2 + c}$. Then
\begin{align*}
\alpha_1 = -8(5 + 8b) + 4(1 + 4(13 + 41b) + 8b_3)  
& = 4(3 + 4b + 8b_4)\\
\beta_1 = -8(5 + 8b) - 4(1 + 4(13 + 41b) + 8b_3)  
& = 4(-7 - 4b + 8b_5)
\end{align*}
for some $b_4, b_5 \in \Z_2$. From \eqref{localquadraticcase} we have that $\Q_2(\sqrt{\alpha_1})$ is an unramified quadratic extension of $\Q_2$ when $-(3 + 4b) \equiv 3 \bmod{8}$, which is impossible. It is a trivial extension of $\Q_2$ when $-(3 + 4b) \equiv 7 \bmod{8}$, which also impossible. 

On the other hand, $\Q_2(\sqrt{\beta_1})$ is an unramified quadratic extension of $\Q_2$ when $7 + 4b \equiv 3 \bmod{8}$, which occurs if and only if $b$ is odd. This is equivalent to $c = 4(5 + 8(2r + 1))$ for some $r \in \Z$, or in other words $c \equiv 52 \bmod{64}$. Similarly, we have $\Q_2(\sqrt{\beta_1}) = \Q_2$ if and only if 
$7 + 4b \equiv 7 \bmod{8}$, which occurs precisely when $b$ is even, or $c \equiv 20 \bmod{64}$.

We now turn to Case II. This gives
$$c^2 + c = 4^{n+2}(1 + 8b + 4^{n+2} + 4^{n+4}b + 4^{n+6}b^2) = 4^{n+2}(1 + 8(b + 2b_6)),$$
for some $b_6 \in \Z$. Hence from Proposition \ref{sqrt expansion} we have
\begin{equation*}
\sqrt{c^2+ c} = 2^{n+2}(1 + 4(b + 2b_6) - 8b_7),
\end{equation*}
for some $b_7 \in \Z_2$.

Let $\alpha_1 = -2c + 2 \sqrt{c^2 + c}$ and $\beta_1 = 
-2c - 2 \sqrt{c^2 + c}$. Then
\begin{align*}
\alpha_1  = -2^{2n+5}(1 + 8b) + 2^{n+3}(1 + 4(b + 2b_6) - 8b_7)  
& = 2^{n+3}(1 + 4b - 2^{n+2} + 8b_8)\\
\beta_1  = -2^{2n+5}(1 + 8b) - 2^{n+3}(1 + 4(b + 2b_6) - 8b_7)  
& = 2^{n+3}(-1 - 4b - 2^{n+2} + 8b_9)
\end{align*}
for some $b_8, b_9 \in \Z_2$. If $n$ is even, then $v_2(\alpha_1)$ and $v_2(\beta_1)$ are both odd, so both 
$\Q_2(\sqrt{\alpha_1})$ and $\Q_2(\sqrt{\beta_1})$ are ramified, from \eqref{localquadraticcase}. Writing $n = 2\ell + 1$ for $\ell \geq 0$ gives 
\begin{align*}
\alpha_1  = 4^{\ell + 2}(1 + 4b - 2^{2\ell+1} + 8b_8) & = 4^{\ell + 2}(1 + 4b + 8b_{10})\\
\beta_1  = 4^{\ell + 2}(-1 - 4b - 2^{n+2} + 8b_9) & = 4^{\ell + 2}(-1 - 4b + 8b_{11})
\end{align*}
for some $b_{10}, b_{11} \in \Z_2$. 
Thus $\Q_2(\sqrt{\alpha_1})$ is an unramified quadratic extension of $\Q_2$ when $-(1 + 4b) \equiv 3 \bmod{8}$, 
which occurs if and only if $b$ is odd.  This is equivalent to $c = 4^{2\ell + 3} (1 + 8(2r + 1))$ for some $r \in \Z$, or in other words $c = 2^{4k + 6} (16r+9)$. If we require $k \geq 1$ we obtain $c = 2^{4k+2}(16r + 9) = 2^{4k}(64r + 36)$.
Similarly, we have $\Q_2(\sqrt{\alpha_1}) = \Q_2$ if and only if  $-(1 + 4b) \equiv 7 \bmod{8}$, 
which occurs if and only if $b$ is even.  This is equivalent to $c = 4^{2\ell + 3} (1 + 16r))$ for some $r \in \Z$, or in other words $c = 2^{4k + 6} (16r+1)$. If we require $k \geq 1$ we obtain $c = 2^{4k+2}(16r + 1) = 2^{4k}(64r + 4)$.

On the other hand, $\Q_2(\sqrt{\alpha_1})$ is unramified quadratic or trivial only when $1 + 4b \equiv 3, 7 \bmod{8}$, which is impossible.

\textbf{Case 3: $L_c^{I_\B} = \Q(\sqrt{-c}, \sqrt{-(c+1)})$.}  Unlike in Case 2, we have here the convenient fact that $L_c^{I_\B}$ is Galois over $\Q$, implying that every prime in $L_c^{I_\B}$ lying over $2$ is unramified. Moreover, some prime of $L_c^{I_\B}$ lying over 2 occurs without ramification or inertial extension if and only if $2$ splits completely in $L_c^{I_\B}$, which is equivalent to $2$ splitting in both $\Q(\sqrt{-c})$ and $\Q(\sqrt{-(c+1)})$. From \eqref{localquadraticcase} we thus are led to these cases:
\begin{enumerate}
\item[i.] \text{$c = 4^jb$ and $c+1 = 4^mb'$ with $b \equiv b' \equiv 7 \bmod{8}$}
\item[ii.] \text{$c = 4^jb$ and $c+1 = 4^mb'$ with not both $b,b'  \equiv 7 \bmod{8}$},
\end{enumerate}
which are equivalent, respectively, to $L_c^{D_\B} = L_c^{I_\B}$ and thus $(e,f,g) = (2,1,4)$; and $L_c^{D_\B}$ being a quadratic sub-extension of $L_c^{I_\B}$ and thus $(e,f,g) = (2,2,2)$. 

We begin with (i.) above. Note that the relative primality of $c$ and $c+1$ implies that exactly one of $j$ and $m$ is zero. If $m = 0$, then $c + 1 \equiv 7 \bmod{8}$ and $c \equiv 0 \bmod{4}$, which is impossible. Similar arguments rule out the case $m=1$, and so we have $m\ge 2$.  On the other hand, it is easy to check that if $c+1 = 4^m(8r+7)$, then $c= 4^m(8r+7)-1$ satisfies the requirements for any choice of $r$, and so we are done with this case.


Next we address $(ii.)$, which we will break into two considerations.  The first is when $b \equiv 7 \bmod 8$ and $b' \equiv 3 \bmod 8$.  This case is nearly identical to case $(i.)$ in its derivation, so we simply note that this combination implies that $c=4^m(8r+3)-1$ with $m\ge 2$ and $r \in \Z$.  

Next we consider when $b\equiv 3 \bmod 8$ and $b'\equiv 3 \bmod 4$ (encapsulating both the $3 \bmod 8$ and $7 \bmod 8$ cases).  Here, again, we must have $j=0$ to have any chance of satisfying the requirment.  Thus we must have $m\ge 1$.  On the other hand, if $m\ge 2$, then we see immediately that $c \equiv 7 \bmod 8$, a contradiction, so that in fact we have $m=1$.  Letting $c=3+8k$, then, we have $c+1 = 4+8k = 4(2k+1)$, where now, since $m=1$, we have $2k+1=4r+3$.  Thus we have $k=2r+1$ and $c=3+8(2r+1)=16r+11$.  

\section{Higher ramification groups} \label{ramification}

In this section, we calculate the ramification filtration for $c$-values where $L_c$ is totally ramified. We limit ourselves to this case because it is representative of the others, and has the most complicated filtration. It is likely that there is a different ramification filtration for each of the cases in Theorem \ref{main1}, though within each of these cases the filtration is the same. This is what we find in the totally ramified case (see Theorem \ref{totram}). 
For the remainder of this section, we assume that $L_c$ is totally ramified, which by Theorem \ref{main1} occurs if and only if $c\equiv 1 \bmod 4$ or $c=2^{2k+1}\cdot m$ with $k\ge 1$ and $m$ odd.

In this case, we may replace the ground field $\Q$ with $\Q_2$ and obtain an extension (which we again denote $L_c$) that still has degree 8, and Galois group $D_4$. The value group of $L_c$ is $(1/8)\Z$, and thus there is a uniformizing element $\pi \in L_c$ with $v(\pi) = 1/8$. Moreover, we have $L_c = \Q_2(\pi)$. We write $v_\pi$ to denote the $\pi$-adic valuation, so that for instance $v_\pi(2) = 8$.  More generally, $v_\pi(x)=8v_2(x)$.  In particular, because this extension is totally ramified, the $\pi$-adic valuation does not depend on our choice of $\pi$.

Recall that for $i \geq 1$, we have by definition 
$$G_i = \{g \in G : v_\pi(g(\pi) - \pi)) \geq i+1 \},$$
where $G$ is the Galois group of a given local Galois extension (here $L_c/\Q_2$). 
Because $L_c/\Q_2$ is totally ramified, $G_0$ is the inertia group and is isomorphic to $D_4$. 

\subsection{The case $c \equiv 1 \bmod{4}$.}
Our prinicipal task is to find a uniformizing element for $L_c$, i.e. an element $\pi \in L_c$ with $v(\pi) = 1/8$ (and thus necessarily $L_c = \Q_2(\pi)$). The following two paragraphs provide motivation for how this element was found.  

Both $\alpha$ and $\beta$ have power series expansions in our hypothetical $\pi$, so write $\alpha=a_0+a_1\pi+...$ and $\beta=b_0+b_1\pi+...$ with $a_i,b_i\in \{0,1\}$.  Our goal is to produce a combination of $\alpha$ and $\beta$ with odd $\pi$-adic valuation, as then, using $\alpha$s and $\beta$s we can produce something of $\pi$-adic valuation which we can use for $\pi$.  

The fact that $v(\alpha)=v(\beta)=1/4$ from Newton polygon considerations tells us that $a_0=a_1=b_0=b_1=0$ and $a_2=b_2=1$.  
Note then that, because $v_\pi(2)=8$, we have $\alpha+\beta=(a_3+b_3)\pi^3+(a_4+b_4)\pi^4+...$.  On the other hand, from the earlier computation of the minimal polynomial of $\alpha+\beta$, we know that $v(\alpha+\beta)=1/2$.  This tells us that $a_3=b_3$ and that $a_4\neq b_4$.  We also know that $\beta^2=\pi^4+...$, so we know that $v_\pi(\alpha+\beta+\beta^2)\ge 5$.  A SAGE computation reveals that, in fact, $v_pi(\alpha+\beta+\beta^2)=7$, giving an element of odd valuation as desired.

We are thus led to consider
\begin{equation} \label{uniformizer1}
\pi := \frac{\alpha}{2}(\alpha + \beta + \beta^2).
\end{equation}
A computation in SAGE reveals that the minimal polynomial for $\pi$ is the following (here we set $c = 1 + 4m$ with $m \in \Z$): 
\begin{align*}
x^8 & + (16m + 4)x^7 + (64m^3 + 160m^2 + 68m + 8)x^6 \\ & + (768m^4 + 1152m^3 + 560m^2 + 112m + 8)x^5 \\ & + (1536m^6 + 6784m^5 + 7712m^4 + 3832m^3 + 960m^2 + 120m + 6)x^4 \\
 &  + (12288m^7 + 34816m^6 + 35328m^5 + 17408m^4 + 4528m^3 + 600m^2 + 32m)x^3 \\ 
 & + (16384m^9 + 94208m^8 + 180224m^7 + 165376m^6 + 81984m^5 + 21936m^4 + 2552m^3 \\ 
 & \qquad - 104m^2 - 56m - 4)x^2 \\
 &  + (65536m^{10} + 278528m^9 + 483328m^8 + 456704m^7 + 261376m^6 + 94528m^5 \\ 
 & \qquad + 21696m^4 + 3056m^3 + 240m^2 + 8m)x \\ 
 & + (65536m^{12} + 425984m^{11} + 1126400m^{10} + 1671168m^9 + 1587968m^8 + 1035392m^7 \\ 
 & \qquad + 481232m^6 + 162384m^5 + 39828m^4 + 6968m^3 + 828m^2 + 60m + 2)
\end{align*}

This polynomial is Eisenstein, for the constant term is $2 \bmod 4$, and hence has $2$-adic valuation $1$, while every other term is visibly even. Thus, $v(\pi)=1/8$ as claimed.    

Now that we are in possession of a uniformizing element of $L_c$, we compute the $G_i$ directly. 

To aid in the computation of the $G_i$, we now compute $v_\pi(\pi-\sigma(\pi))$ for each of the $7$ non-identity elements $\sigma \in G$.  

\begin{lemma}

$v_\pi(\alpha+\alpha\beta+\beta)=6.$

\end{lemma}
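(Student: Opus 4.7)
The plan is to bypass the power-series manipulation that the preceding discussion set up, and instead compute $v_\pi(\alpha + \alpha\beta + \beta)$ directly by squaring, exploiting the identities $\alpha^2 + \beta^2 = -2c$ and $(\alpha\beta)^2 = c^2 + c$, which follow immediately from $\alpha^2 = -c + \sqrt{-c}$ and $\beta^2 = -c - \sqrt{-c}$. The advantage is that these identities reduce every quantity to expressions in $c$ and $\alpha\beta$, whose $\pi$-adic valuations are easy to read off using $v_\pi(x) = 8v_2(x)$ for $x \in \Q_2$.

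First I would establish the two preliminary valuations $v_\pi(\alpha\beta) = v_\pi(\alpha + \beta) = 4$. Writing $c = 1 + 4m$ with $m \in \Z$, we have $c^2 + c = c(c+1) = 2(1+4m)(1+2m)$, the product of $2$ with two odd integers, so $v_\pi(c^2 + c) = 8$ and hence $v_\pi(\alpha\beta) = 4$. Then $(\alpha+\beta)^2 = -2c + 2\alpha\beta$ has summands of $\pi$-adic valuations $8$ and $12$ respectively, forcing $v_\pi((\alpha+\beta)^2) = 8$ and $v_\pi(\alpha + \beta) = 4$.

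Next I would expand
$$(\alpha + \alpha\beta + \beta)^2 = (\alpha+\beta)^2 + 2(\alpha+\beta)\alpha\beta + (\alpha\beta)^2 = (c^2 - c) + 2\alpha\beta + 2(\alpha+\beta)\alpha\beta.$$
The three summands have valuations $v_\pi(c^2 - c) = v_\pi(4m(1+4m)) = 16 + 8v_2(m) \geq 16$ (with the convention $v_\pi(0) = \infty$ if $c = 1$), $v_\pi(2\alpha\beta) = 8 + 4 = 12$, and $v_\pi(2(\alpha+\beta)\alpha\beta) = 8 + 4 + 4 = 16$. Since the minimum $12$ is attained uniquely by the middle summand, the non-archimedean triangle inequality gives $v_\pi((\alpha + \alpha\beta + \beta)^2) = 12$, and extracting the square root yields $v_\pi(\alpha + \alpha\beta + \beta) = 6$.

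The only mild obstacle is verifying that the three summand valuations are as claimed, particularly confirming that $4 \mid c^2 - c$ picks up the extra factor of $4$ needed to beat $12$; this is immediate from $c = 1 + 4m$. Once that is in hand, no subtle cancellation argument is required, since the strict inequality $12 < 16$ isolates the dominant term and removes any dependence on the specific value of $m$ (including the degenerate case $c = 1$).
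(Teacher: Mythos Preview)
Your proof is correct and takes a genuinely different route from the paper's. The paper writes
\[
\alpha + \alpha\beta + \beta \;=\; (\alpha + \beta + \beta^2) + \beta(\alpha - \beta),
\]
and then invokes the previously established facts $v_\pi(\alpha + \beta + \beta^2) = 7$ (obtained via a SAGE computation earlier in the section), $v_\pi(\beta) = 2$, and $v_\pi(\alpha - \beta) = 4$; the strong triangle inequality then gives valuation $\min(7,6) = 6$. By contrast, you square and use the identities $\alpha^2 + \beta^2 = -2c$ and $(\alpha\beta)^2 = c^2 + c$ to reduce $(\alpha + \alpha\beta + \beta)^2$ to the three summands $(c^2 - c)$, $2\alpha\beta$, and $2(\alpha+\beta)\alpha\beta$, whose valuations ($\geq 16$, $12$, $16$) are read off directly from $v_\pi = 8v_2$ on $\Q_2$ together with the easy computations $v_\pi(\alpha\beta) = v_\pi(\alpha+\beta) = 4$. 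Your argument is entirely self-contained and does not rely on the computer-verified valuation $v_\pi(\alpha + \beta + \beta^2) = 7$; the paper's argument, on the other hand, recycles that already-computed number for a very short additive decomposition. Both are clean; yours has the advantage of being human-checkable from scratch.
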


\begin{proof} 

We know from above that $v_\pi(\alpha+\beta+\beta^2)=7$, $v_\pi(\beta)=2$ and $v_\pi(\pm\alpha+\beta)=4$.  Note that $\alpha+\alpha\beta+\beta = (\alpha+\beta+\beta^2) + \beta(\alpha-\beta)$.  The result follows from the strong triangle inequality.

\end{proof}

More generally, using negation and elements of the Galois group, we obtain the following:

\begin{corollary} 

$v_\pi(\pm \alpha \pm \alpha\beta \pm \beta) = 6.$ 

\end{corollary}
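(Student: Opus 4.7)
The plan is to deduce all eight sign variants from the single case $v_\pi(\alpha+\alpha\beta+\beta)=6$ established in the preceding lemma, by exploiting two valuation-preserving operations: negation and the action of $\Gal(L_c/\Q_2)$. Since $L_c/\Q_2$ is totally ramified, the $\pi$-adic valuation is the unique extension of the normalized $2$-adic valuation (scaled by the ramification index $8$), and hence $v_\pi\circ\sigma=v_\pi$ for every $\sigma\in G=\Gal(L_c/\Q_2)\cong D_4$. Similarly $v_\pi(-x)=v_\pi(x)$.

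The first step is to identify two specific automorphisms in $G$. Using the embedding $G\hookrightarrow\Aut(T)$ into the automorphism group of the binary tree with root $0$, children $\pm\sqrt{-c}$, and leaves $\{\pm\alpha,\pm\beta\}$ (with $\alpha^2=-c+\sqrt{-c}$, $\beta^2=-c-\sqrt{-c}$), the full wreath-product action of $C_2\wr C_2\cong D_4$ is realized. In particular there exist $\sigma_\alpha,\sigma_\beta\in G$ with
\[
\sigma_\alpha(\alpha)=-\alpha,\quad \sigma_\alpha(\beta)=\beta,\qquad \sigma_\beta(\alpha)=\alpha,\quad \sigma_\beta(\beta)=-\beta.
\]
(Equivalently, $\sigma_\alpha$ generates $\Gal(L_c/\Q(\beta))$ and $\sigma_\beta$ generates $\Gal(L_c/\Q(\alpha))$; both of these are order-$2$ subgroups of $G$ by Figure~\ref{fig:lattice}.)

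The second step is routine bookkeeping. Applying $1,\sigma_\alpha,\sigma_\beta,\sigma_\alpha\sigma_\beta$ to $\alpha+\alpha\beta+\beta$ yields the four combinations
\[
\alpha+\alpha\beta+\beta,\quad -\alpha-\alpha\beta+\beta,\quad \alpha-\alpha\beta-\beta,\quad -\alpha+\alpha\beta-\beta,
\]
and negating each produces the remaining four. Together these exhaust all eight expressions $\pm\alpha\pm\alpha\beta\pm\beta$, and each has the same $\pi$-adic valuation as $\alpha+\alpha\beta+\beta$, namely $6$.

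The only potential obstacle is the assertion that $\sigma_\alpha$ and $\sigma_\beta$ actually lie in $G$ rather than merely in $\Aut(T)$; but this is immediate from the fact that $[L_c:\Q_2]=8$ and $\Gal(L_c/\Q_2)\cong D_4$, forcing the injection $G\hookrightarrow\Aut(T)\cong D_4$ to be an isomorphism. Hence every tree automorphism, in particular $\sigma_\alpha$ and $\sigma_\beta$, is realized by a Galois element, and the proof is complete.
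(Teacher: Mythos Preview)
Your proof is correct and follows exactly the approach the paper sketches: the corollary is stated as following ``using negation and elements of the Galois group,'' and you carry this out in detail by identifying the sign-flip automorphisms (the paper's $\sigma_1$ and $\sigma_2$) and checking that, together with negation, they produce all eight sign patterns. The justification that $v_\pi$ is Galois-invariant, and that the full $D_4$ is realized on the roots, is appropriate and matches what the paper uses implicitly.
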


Now that we know the valuations of $\pm \alpha, \pm\beta, \pm\alpha \pm \beta, \pm \alpha \pm \alpha\beta \pm \beta^2$, we may readily verify the following using a small amount of algebra and the strong triangle inequality.

$\sigma_1:(\alpha,\beta)\mapsto (-\alpha,\beta)$ : $\pi - \sigma_1(\pi) = \alpha\beta + \alpha \beta^2$, so $v_\pi(\pi-\sigma_1(\pi))=4$

$\sigma_2:(\alpha,\beta)\mapsto (\alpha,-\beta)$ : $\pi - \sigma_2(\pi) = \alpha\beta$, so $v_\pi(\pi-\sigma_2(\pi))=4$

$\sigma_3:(\alpha,\beta)\mapsto (-\alpha,-\beta)$ : $\pi - \sigma_3(\pi) = \alpha\beta^2$, so $v_\pi(\pi-\sigma_3(\pi))=6$

$\sigma_4:(\alpha,\beta)\mapsto (\beta,\alpha)$ : $\pi - \sigma_4(\pi) = (\alpha-\alpha\beta+\beta)(\alpha-\beta)/2$, so $v_\pi(\pi-\sigma_4(\pi))=2$

$\sigma_5:(\alpha,\beta)\mapsto (-\beta,\alpha)$ : $\pi - \sigma_5(\pi) = ((\alpha+\beta)(\alpha+\alpha\beta+\beta)-2\beta^2)/2$, so $v_\pi(\pi-\sigma_5(\pi))=2$

$\sigma_6:(\alpha,\beta)\mapsto (\beta,-\alpha)$ : $\pi - \sigma_6(\pi) = ((\alpha-\beta)(-\alpha-\alpha\beta+\beta)+2\alpha^2)/2$, so $v_\pi(\pi-\sigma_6(\pi))=2$

$\sigma_7:(\alpha,\beta)\mapsto (-\beta,-\alpha)$ : $\pi - \sigma_7(\pi) = (\alpha+\beta)(-\alpha+\alpha\beta+\beta)/2$, so $v_\pi(\pi-\sigma_7(\pi))=2$

Summarizing, we have:

\begin{proposition}

When $c\equiv 1 \bmod 4$, we have the following ramification groups, using the notation above.  

\begin{enumerate}

\item $G_0=G_1=G$
\item $G_2=G_3=\{\sigma_0, \sigma_1, \sigma_2, \sigma_3\}$
\item $G_4=G_5=\{\sigma_0, \sigma_3 \}$
\item $G_n=\{\sigma_0\}$ for $n\ge 6$.  

\end{enumerate}

\end{proposition}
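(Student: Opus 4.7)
The plan is to read the ramification filtration directly off the valuation data already recorded in the seven bullets preceding the proposition. Recall the definition
\[
G_i = \{\sigma \in G : v_\pi(\sigma(\pi) - \pi) \geq i+1\},
\]
so identifying $G_i$ reduces to comparing $i+1$ against the seven values $v_\pi(\pi - \sigma_j(\pi))$ for $j = 1,\dots,7$, together with the trivial fact that $v_\pi(\sigma_0(\pi) - \pi) = +\infty$. The computed valuations are $4,4,6,2,2,2,2$ for $\sigma_1,\dots,\sigma_7$ respectively.

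First I would verify $G_0 = G$: this is immediate since $L_c/\Q_2$ is totally ramified (equivalently, each of the seven nontrivial valuations is at least $1$). Since each of those valuations is in fact at least $2$, the same threshold argument gives $G_1 = G$. For $G_2$ the threshold becomes $3$, which the four order-$2$ valuations $\sigma_4,\sigma_5,\sigma_6,\sigma_7$ fail, so $G_2 = \{\sigma_0,\sigma_1,\sigma_2,\sigma_3\}$. The threshold for $G_3$ is $4$, which $\sigma_1,\sigma_2,\sigma_3$ all still meet, giving $G_3 = G_2$. For $G_4$ the threshold jumps to $5$, which $\sigma_1,\sigma_2$ fail (both valuation $4$) but $\sigma_3$ (valuation $6$) still meets, so $G_4 = \{\sigma_0,\sigma_3\}$. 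The threshold for $G_5$ is $6$, still met by $\sigma_3$, so $G_5 = G_4$. For $i \geq 6$ the threshold is at least $7$, which $\sigma_3$ fails, so $G_i = \{\sigma_0\}$.

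The only genuine work is therefore contained in the lemma, corollary, and seven algebraic identities preceding the proposition, which express each $\pi - \sigma_j(\pi)$ as a combination of the quantities $\pm\alpha$, $\pm\beta$, $\pm\alpha\pm\beta$, $\pm\alpha\pm\alpha\beta\pm\beta$, and $\alpha\beta$, $\alpha\beta^2$, whose $\pi$-adic valuations are $2,2,4,6,6,6$ respectively (the first two from the Newton polygon, the next from the minimal polynomial of $\alpha+\beta$, and the last three from the lemma and its corollary). Each identity is a short symbolic manipulation using the definition $\pi = (\alpha/2)(\alpha+\beta+\beta^2)$ together with the Galois action, and then the strong triangle inequality delivers the claimed valuation. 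No further obstacle arises; once the seven valuations are in hand, the proposition is purely a matter of bookkeeping. The one step warranting care is ensuring that in each identity the terms of smallest valuation do not cancel when the strong triangle inequality is applied, which is why the identities are written so as to exhibit a single dominant term (or a sum whose dominant terms have known non-cancelling valuations).
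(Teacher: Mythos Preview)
Your proposal is correct and matches the paper's approach exactly: the proposition is stated there as a summary of the seven preceding valuation computations, and your bookkeeping with the thresholds $i+1$ is precisely the intended reading. One small slip in your summary paragraph: $v_\pi(\alpha\beta) = 4$, not $6$ (as you yourself use correctly when handling $\sigma_2$), so your list of auxiliary valuations should read $2,2,4,6,4,6$ rather than $2,2,4,6,6,6$.
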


\subsection{The case $c = 2^{2k+1}m$, where $k \geq 1$ and $m$ is odd.}

The computations in this case are very similar to the previous case, so we omit details.  Again we start by producing a uniformizer.  If $k$ is even, write $k=2m$.  Then, one may verify, as above, that $\pi=((\alpha/2^m)^3+((\alpha+\beta)/2^m)+2)/2$ is a uniformizer.  Similarly, if $k=2m+1$, then one may take $\pi=((\alpha/2^m)+((\alpha+\beta)/2^{m+1})^3+2)/2$ as a uniformizer.  

As before we may, for each $\sigma\in G$, compute $v_\pi(\pi-\sigma(\pi))$, yielding the following proposition.  

\begin{proposition}

When $c=2^{2k+1}m$ with $k\ge 1$ and $m$ odd, we have the following ramification groups, using the notation above.  

\begin{enumerate}

\item $G_0=G_1=G$
\item $G_2=G_3=\{\sigma_0, \sigma_3, \sigma_5, \sigma_6\}$
\item $G_4=G_5=G_6=G_7=\{\sigma_0, \sigma_3 \}$
\item $G_n=\{\sigma_0\}$ for $n\ge 8$.  

\end{enumerate}

\end{proposition}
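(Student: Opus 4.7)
The plan is to mirror the approach used in the case $c \equiv 1 \bmod 4$. First, I would exhibit an explicit uniformizer $\pi \in L_c$ (so that $v_\pi(\pi) = 1$ and $L_c = \Q_2(\pi)$); then, for each non-identity $\sigma \in G \cong D_4$, compute $v_\pi(\sigma(\pi) - \pi)$; and finally read off the ramification filtration via the definition $G_i = \{\sigma : v_\pi(\sigma(\pi) - \pi) \geq i+1\}$.

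For the uniformizer, I would use the formulas already suggested in the paper. Writing $k = 2\ell$ (resp.\ $k = 2\ell+1$), take
\[
\pi = \tfrac{1}{2}\bigl( (\alpha/2^\ell)^3 + (\alpha+\beta)/2^\ell + 2 \bigr) \quad \text{resp.} \quad \pi = \tfrac{1}{2}\bigl( \alpha/2^\ell + ((\alpha+\beta)/2^{\ell+1})^3 + 2 \bigr).
\]
To verify that $\pi$ really does have $\pi$-adic valuation one, I would compute its minimal polynomial over $\Q_2$ by symbolic algebra (as in the previous case, using SAGE) and check that this polynomial is Eisenstein at $2$. The key input valuations needed for this and for what follows come from the Newton polygon of $f_c^2(x) = x^4 + 2cx^2 + c^2+c$ (which yields $v_\pi(\alpha) = v_\pi(\beta) = 4k+2$) together with the identities $\alpha^2 + \beta^2 = -2c$ and $(\alpha\beta)^2 = c^2 + c$ (which yield $v_\pi(\alpha\pm\beta) = 4k+6$ and $v_\pi(\alpha\beta) = 8k+4$). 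These building blocks control both the valuation of $\pi$ itself and all the valuations of the $\sigma_i(\pi) - \pi$ below.

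With $\pi$ in hand, for each $\sigma_i$ ($i = 1, \ldots, 7$) I would write $\sigma_i(\pi) - \pi$ as an explicit polynomial expression in $\alpha$ and $\beta$ divided by a power of $2$, and apply the strong triangle inequality to the basic valuation data above. The target values, forced by the stated proposition, are
\[
v_\pi(\sigma_i(\pi)-\pi) = 2 \; \text{ for } i \in \{1,2,4,7\}, \quad v_\pi(\sigma_i(\pi)-\pi) = 4 \; \text{ for } i \in \{5,6\}, \quad v_\pi(\sigma_3(\pi)-\pi) = 8.
\]
The main obstacles I anticipate are twofold: (i) splitting cleanly on the parity of $k$, since each parity requires its own uniformizer and its own Eisenstein verification; and (ii) pinning down $v_\pi(\sigma_3(\pi) - \pi) = 8$ exactly, which is the deepest ramification jump and requires cancellation of several leading terms — an argument in the spirit of the $v_\pi(\alpha + \beta + \beta^2) = 7$ computation used in the $c\equiv 1\bmod 4$ case, supplemented by a SAGE check, should suffice. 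The other six valuations, by contrast, follow routinely from the strong triangle inequality once the basic valuations of $\alpha$, $\beta$, $\alpha\pm\beta$, and $\alpha\beta$ are in place.
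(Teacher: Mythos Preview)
Your proposal is correct and follows essentially the same approach as the paper: exhibit the explicit uniformizer (splitting on the parity of $k$), verify it via an Eisenstein check on the minimal polynomial, and then compute $v_\pi(\sigma_i(\pi)-\pi)$ for each non-identity $\sigma_i$ to read off the filtration. The paper in fact omits all details beyond naming the uniformizers, so your outline---including the correct target valuations $2,2,2,2$ for $\sigma_1,\sigma_2,\sigma_4,\sigma_7$, $4,4$ for $\sigma_5,\sigma_6$, and $8$ for $\sigma_3$, and the basic building-block valuations $v_\pi(\alpha)=v_\pi(\beta)=4k+2$, $v_\pi(\alpha\pm\beta)=4k+6$, $v_\pi(\alpha\beta)=8k+4$---is already more explicit than what the paper records.
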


\end{document}